\documentclass[12pt]{amsart}
\usepackage{bbm, amssymb, amsmath, amsfonts}

\usepackage{graphicx, epsfig}

\usepackage{xcolor}

 \makeindex


\setlength{\marginparwidth}{20mm}

\def\Bbb{\mathbb}

\def\bR{{\Bbb R}}

\def\bpm{\begin{pmatrix}}
\def\epm{\end{pmatrix}}

\def\bee{\begin{enumerate}}
\def\ee{\end{enumerate}}

\textwidth15.5cm \textheight21cm \evensidemargin.2cm
\oddsidemargin.2cm

\addtolength{\headheight}{3.2pt}    

\newtheorem{thm}{Theorem}[section]

\newtheorem{prop}[thm]{Proposition}
\newtheorem{proposition}[thm]{Proposition}

\newtheorem{lemma}[thm]{Lemma}
\newtheorem{remark}[thm]{Remark}

\begin{document}


\title[Estimates for convolution operators] {On the sharp estimates for convolution operators with oscillatory kernel}
\author[I. Ikromov]{Isroil A. Ikromov}
\address{Institute of Mathematics named after V.I. Romanovsky,
University Boulevard 15, 140104, Samarkand,  Uzbekistan} \email{{\tt
i.ikromov@mathinst.uz}}

 \author[I. Ikromova]{Dildora I. Ikromova}
\address{Samarkand State University, University Boulevard 15, 140104, Samarkand, Uzbekistan
} \email{{\tt ikromova\_89@mail.ru}}

\subjclass[2010]{42B10, 42B20, 42B37}
\keywords{Convolution operator, hypersurface, oscillatory integral, singularity}


\begin{abstract} {
In this article, we   study the convolution  operators $M_k$  with oscillatory kernel, which are related to solutions to the Cauchy problem for the strictly hyperbolic equations.  The operator  $M_k$ is associated to the characteristic hypersurfaces $\Sigma\subset \mathbb{R}^3$ of a hyperbolic  equation and smooth amplitude function, which is homogeneous  of order $-k$ for large values of the argument.
We   study  the convolution operators    assuming  that the corresponding amplitude function is contained in a sufficiently small conic  neighborhood of a given point $v\in \Sigma$ at which  exactly one of the principal curvatures of the surface $\Sigma$ does not vanish. Such surfaces exhibit singularities of type $A$ in the sense of Arnol'd's classification.  Denoting by $k_p$ the minimal   number such that $M_k$ is $L^p\mapsto L^{p'}$-bounded for $k>k_p,$ we show that the number $k_p$ depends on some discrete characteristics of the surface $\Sigma$.
}

 \end{abstract}
\maketitle



\thispagestyle{empty}

\setcounter{equation}{0}
\section{Introduction}\label{introduction}
It is well known that  solutions to the Cauchy problem for strictly hyperbolic equation up to a smooth function can be written as a sum of convolution operators of the type:
\begin{eqnarray*}\nonumber
\mathcal{M}_k=F^{-1}[e^{it\varphi(\xi)}a_k]F,
\end{eqnarray*}
where  $F$ is the Fourier transform operator , $\varphi\in
C^{\infty}(\mathbb{R}^{\nu}\backslash\{0\})$
 is homogeneous of order one, $a_{k}\in
C^{\infty}(\mathbb{R}^{\nu}_\xi)$ is a homogeneous function of order $-k$ for large $\xi$.

After scaling arguments in the time $t>0$ the operator $\mathcal{M}_k$ is reduced to the following convolution operator:
\begin{equation}\label{convoper}
M_k=F^{-1}[e^{i\varphi(\xi)}a_k]F.
\end{equation}

Let $1\leq p\leq 2$ be a fixed number:  We consider the problem: {\it find the minimal  number $k(p)$  such that $M_{k}: L^{p}(\mathbb{R}^\nu)\rightarrow
L^{p'}(\mathbb{R}^\nu)$ is bounded for any $k>k(p)$. }

Note that if $a_k(\xi)=|\xi|^{-k}$ for large $\xi$ with $0<k<\nu$ and $\varphi\equiv0$ then the problem can be solved by using the classical Hardy-Littlewood-Sobolev's inequality.
 More precisely, due to the classical Hardy-Littlewood-Sobolev's inequality  if $k\ge 2n(1/p-1/2)$ then the operator \eqref{convoper} is bounded from $L^p(\mathbb{R}^\nu)$ to $L^{p'}(\mathbb{R}^\nu)$.
Moreover, if $a_k$ is a classical  symbol of PDO and $\varphi\equiv0$ then we dealt with $L^p(\mathbb{R}^\nu)\mapsto L^{p'}(\mathbb{R}^\nu)$ boundedness problem for   pseudo-differential operators (see \cite{sugumoto88}). It is well-known that if $a_k$ is a classical symbol of the PDO with order zero then  the corresponding  PDO is bounded on
$L^p(\mathbb{R}^\nu)$ for $1<p\le2$.

Further, we will assume that the function $\varphi$ preserves sign, e.g. we will assume that $\varphi(\xi)\neq0$ for any $\xi\in \mathbb{R}^\nu\setminus\{0\}.$
 Note that, due to the oscillation factor  for a wider range of the order $k$ of the symbol $a_k$ we get  the $L^p(\mathbb{R}^\nu)\mapsto L^{p'}(\mathbb{R}^\nu) $ boundedness of the operator \eqref{convoper}.

Next, without loss of generality we may and will assume that $\varphi(\xi)>0$ for any $\xi\neq0$.
Since  $\varphi$ is a smooth homogeneous function of order one, then, due to the Euler's homogeneity relation we have:
$$
\sum_{j=1}^{n}\xi_{j}\frac{\partial\varphi(\xi)}{\partial\xi_j}=\varphi(\xi),
$$
 and hence the set $\Sigma$ defined by the following
 $$\Sigma=\{\xi\in\mathbb{R}^\nu : \varphi(\xi)=1\}$$
  is a smooth  or an analytic hypersurface provided $\varphi$ is a smooth or a real analytic function respectively.

Further, we use notation:
\begin{equation}\label{(1.2)}
k_p:=k_p(\Sigma):=\inf_{k>0}\{k>0:  M_{k}\, \mbox{is}\, L^{p}(\mathbb{R}^\nu)\rightarrow
L^{p'}(\mathbb{R}^\nu)\, \mbox{bounded for any}\, a_k  \}.
\end{equation}
It turns out that the number $k_p(\Sigma)$ depends on  geometric properties of the  hypersurface
$\Sigma.$ More precisely, the number depends on behavior of the Fourier transform of measures supported on $\Sigma$. The monograph   \cite{Iosevich} contains many modern   results related to  the Fourier transform of surface carried measures.

M. Sugimoto \cite{sugumoto98} consider the problem for the case when $\Sigma\subset \mathbb{R}^3$ is an analytic   surface having at least one non-vanishing principal curvature at every point and obtain an upper bound for the number $k_p(\Sigma)$. More precisely, M.Sugimoto introduces three classes of hypersurfaces in $\mathbb{R}^3$ with at least one non-vanishing principal curvature. For each class he obtains an upper bound for the number  $k_p(\Sigma)$. Moreover, he suggested  examples for each classes showing sharpness of the bounds for that examples.

  The natural question is: {\it Whether the upper estimate for the number $k_p(\Sigma)$ given by M. Sugimoto is the sharp bound for each hypersurfaces of the appropriate classes ?}

We obtain the exact value of $k_p(\Sigma)$ improving the results proved by M. Sugimoto for arbitrary analytic hypersurfaces having at least one non-vanishing  principal curvature and  smooth hypersurfaces under the so-called $R-$ condition introduced in \cite{IMmon}.

Since $\Sigma$ is a compact hypersurface, then following M. Sugimoto it is enough to consider the local version of the problem.
More, precisely we may assume that the amplitude function
 $a_{k}(\xi)$ is concentrated in a sufficiently small conic neighborhood $\Gamma$ of a fixed point $v\in S^2$  (where
$S^2$ is the unit sphere centered at the origin of the space  $\mathbb{R}^3$) and
$\varphi(\xi)\in C^{\infty}(\Gamma)$. Fixing such a point $v\in \mathbb{R}^3$, let us define the following local exponent $k_p(v)$ associated to this point:
\begin{equation}\label{local}
k_p(v):=\inf_{k>0}\{k: \exists\Gamma, \,  M_{k}: L^{p}(\mathbb{R}^3)\mapsto
L^{p'}(\mathbb{R}^3)\, \mbox{is bounded, whenever}  \, supp(a_k)\subset \Gamma\}.
\end{equation}

Further, we  use the following standard notation, assuming $F$ being  a sufficiently smooth function:
$$
\partial^\gamma F(x):=\partial_1^{\gamma_1}\dots \partial_\nu^{\gamma_\nu}F(x):=\frac{\partial^{|\gamma|}F(x)}{\partial x_1^{\gamma_1}\dots\partial x_\nu^{\gamma_\nu}},
$$ where $\gamma=(\gamma_1,\dots, \gamma_\nu)\in \mathbb{Z}^\nu_+$ is a multiindex,  with $\mathbb{Z}_+:=\{0\}\cup \mathbb{N}$, and $|\gamma|:=\gamma_1+\dots+\gamma_\nu.$

Also, for the sake of being definite  we will assume that
$v=(0,0,1)$ and $\varphi(0,0,1)=1$. Then after possible a linear transform in the space $\mathbb{R}^3_\xi$, which preserves the point $v$, we may assume $\partial_1\varphi(0, 0, 1)=0$ as well as $\partial_2\varphi(0, 0, 1)=0$.
Thus, in a neighborhood of the point  $v$ the hypersurface $\Sigma$ is given as the graph of a smooth function:
$$\Sigma\cap\Gamma=\{\xi\in\Gamma:\varphi(\xi)=1\}=\{
(\xi_1, \xi_2, 1+\phi(\xi_1,\xi_2))\in \mathbb{R}^3:  (\xi_1,\xi_2)\in U\},$$
where
$U\subset \mathbb{R}^2$ is a sufficiently small neighborhood of the origin and, $\phi\in C^{\infty}(U)$ is a smooth function satisfying the conditions: $\phi(0,0)=0,
\nabla\phi(0,0)=0$
(compare with \cite{sugumoto98}).

Surely, similarly one can define $\Sigma$ in a neighborhood of the point $v=(0,\dots, 0, 1)\in \mathbb{R}^\nu$ as
the graph of a smooth function $\phi$ defined in a sufficiently small  neighborhood $U$ of the origin of $\mathbb{R}^{\nu-1}$.

Also, we will assume that  the function $\phi$ has a singularity of type  $A_n (1\le n\le \infty)$ at the origin (see \cite{agv} for definition of $A$ type singularities).  The last condition means that the hypersurface  $\Sigma$ has exactly  one  non-vanishing principal curvature at the point $v$, whenever $n\ge2$ in the case $\nu=3$.

We use the following Proposition \cite{IMmon}:

\begin{prop}\label{normform}
 Assume that  $\phi$ is a smooth function defined in a neighborhood of the origin of $\mathbb{R}^2$ satisfying the conditions: $\partial_2^2\phi(0,0)\neq0$ and also $ \partial^\gamma\phi(0, 0)=0$ for any $|\gamma|\le 2$ with $\gamma\neq(0, 2)$.

    Then,  $\phi$ can be written in the following form on a sufficiently small neighborhood of the origin:
    $$
    \phi(x_1,x_2)=b(x_1,x_2)(x_2-\psi(x_1))^2+b_0(x_1), \eqno(1.2.2)
    $$
    where $b, b_0$ and $\psi$ are smooth functions with $b(0, 0)\neq 0$. The function $\psi (b_0)$  can be written as $\psi(x_1) =x_1^m\omega(x_1) $ with $\omega(0)\neq0, \, m\ge2$ and $  (b_0(x_1)=x_1^n\beta(x_1)$, with $\beta(0)\neq0, \, n\ge2$)  unless $\psi(b_0)$ is a flat function.
\end{prop}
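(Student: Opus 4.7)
The plan is to reduce the statement to a routine application of the implicit function theorem followed by a Taylor expansion with integral remainder; the hypothesis $\partial_2^2\phi(0,0)\neq 0$ is exactly what makes both tools available in the $x_2$ direction.

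First I would locate the critical point of $\phi(x_1,\cdot)$ for each fixed $x_1$. Since the function $(x_1,x_2)\mapsto \partial_2\phi(x_1,x_2)$ vanishes at the origin and has $\partial_2(\partial_2\phi)(0,0)=\partial_2^2\phi(0,0)\neq 0$, the implicit function theorem yields a unique smooth function $\psi$ defined near $0$ with $\psi(0)=0$ and
$$
\partial_2\phi(x_1,\psi(x_1))\equiv 0
$$
on a neighborhood of $0$. Differentiating this identity in $x_1$ at the origin and using $\partial_1\partial_2\phi(0,0)=0$ gives $\psi'(0)=0$, so $\psi$ vanishes to order at least two at $0$. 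Hence either $\psi$ is flat at the origin, or its Taylor series has a lowest nonzero coefficient at some order $m\ge 2$, and in that case $\psi(x_1)=x_1^m\omega(x_1)$ with $\omega(0)\neq 0$ by factoring out the lowest power in the Taylor expansion (Borel/Hadamard lemma).

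Next I would perform a second-order Taylor expansion of $\phi(x_1,\cdot)$ around the point $\psi(x_1)$ with integral remainder:
$$
\phi(x_1,x_2)=\phi(x_1,\psi(x_1))+(x_2-\psi(x_1))\,\partial_2\phi(x_1,\psi(x_1))+(x_2-\psi(x_1))^2\, b(x_1,x_2),
$$
where
$$
b(x_1,x_2):=\int_0^1 (1-s)\,\partial_2^2\phi\bigl(x_1,\psi(x_1)+s(x_2-\psi(x_1))\bigr)\,ds
$$
is smooth by the smooth dependence of parameters. The linear term drops out by the defining property of $\psi$, so defining $b_0(x_1):=\phi(x_1,\psi(x_1))$ yields the claimed decomposition. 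Moreover $b(0,0)=\tfrac12\,\partial_2^2\phi(0,0)\neq 0$, so $b$ is nonvanishing near the origin.

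Finally I would check the vanishing orders of $b_0$. Because $\partial_2\phi(x_1,\psi(x_1))\equiv 0$, we have $b_0'(x_1)=\partial_1\phi(x_1,\psi(x_1))$, and the hypotheses $\phi(0,0)=0$ and $\partial_1\phi(0,0)=0$ give $b_0(0)=b_0'(0)=0$. Thus $b_0$ vanishes to order at least $2$ at the origin, and as above either $b_0$ is flat or $b_0(x_1)=x_1^n\beta(x_1)$ with $\beta(0)\neq 0$ and $n\ge 2$. The only subtlety worth being careful about is the dichotomy between the flat and non-flat cases, which is handled uniformly by the standard fact that a smooth function on $\mathbb R$ which is not flat at the origin factors as $x_1^k$ times a smooth non-vanishing function, where $k$ is the order of its first non-zero Taylor coefficient; there is no real obstacle beyond this.
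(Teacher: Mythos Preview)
Your argument is correct: the implicit function theorem locates the smooth critical branch $\psi$, the integral-remainder Taylor expansion in $x_2$ about $\psi(x_1)$ produces the factor $b$ with $b(0,0)=\tfrac12\partial_2^2\phi(0,0)\neq0$, and the vanishing conditions on the low-order derivatives of $\phi$ force $\psi(0)=\psi'(0)=0$ and $b_0(0)=b_0'(0)=0$, yielding $m\ge2$ and $n\ge2$ via the Hadamard factorisation. (In fact the hypotheses also give $b_0''(0)=0$, but the proposition only asserts $n\ge2$.)

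As for the comparison: the paper does not supply its own proof of this proposition but quotes it from the monograph \cite{IMmon}. The argument there is essentially the one you wrote, so there is no substantive difference in approach to report.
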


\begin{remark}
It is easy to show that the numbers $m, n$ are well-defined for arbitrary smooth function $\phi$ having $A$ type singularity (see \cite{sugumoto98} and also \cite{IMmon}). Moreover, to each point $v\in \Sigma$ of the surface with at least one non-vanishing principal curvature we can attach a pair $(m(v), n(v))$ due to the Proposition \ref{normform}.
\end{remark}

\subsection{Classes of hypersurfaces}

Following M. Sugimoto  \cite{sugumoto98} we can introduce the following classes of hypersurfaces: We say that $\Sigma$ is of type I with order $n$ if $b_0(x_1)=x_1^n\beta(x_1)$, where $\beta$ is a smooth function with $\beta(0)\neq0$; $\Sigma$ is of type II  with order $m$ if $b_0$ is a flat function at the origin and also $\psi(x_1)=x_1^m\omega(x_1)$, where $\omega$ is a smooth function with $\omega(0)\neq0$, and finally, $\Sigma$ is of   type III if both functions $\psi, \, b_0$  are flat  at the origin.

Further,  we will assume that if $\Sigma$ is a $C^{\infty}$
hypersurface of type II then $b_0 \equiv 0$. This condition agree with the so-called
" $R-$condition" introduced in the monograph \cite{IMmon}.

Actually, M. Sugimoto  obtained an upper bound for the number $k_p(v)$ and also he provided examples for each classes showing sharpness of the bound for that examples.

\subsection{The main results}\label{genconj}

In this paper we will prove the following statement, which is the main our result.

\begin{thm}\label{main} Let $\Sigma\subset \mathbb{R}^3$ be a smooth surface having at least one non-vanishing principal curvature at the point $(0, 0, 1)$ and $1\le p\le 2$ be a fixed number and also $(m, n)$ be the pair defined by the Proposition \ref{normform}. Then there exists a conic neighborhood $\Gamma$ of the point $v$ such that for any $a_k$ with $supp(a_k)\subset \Gamma$ the   following statements hold:\\
(i) If $2m\ge n$ then $k_p(v)=(5-\frac2n)(\frac1p-\frac12)$;\\
(ii) If $\Sigma$ is a smooth hypersurface satisfying the $R-$condition and $m\ge 3$ and also $2m<n\le \infty$  then
\begin{equation}\label{II case}
k_p(v)= \max\left\{\left(5-\frac{1}{m}\right)\left(\frac{1}{p}
 -\frac{1}{2}\right), \, \left(6-\frac{2(m+1)}{n}\right)\left(\frac{1}{p}-\frac{1}{2}\right)-\frac{1}{2}
 +\frac{m}{n}\right\}.
\end{equation}
\end{thm}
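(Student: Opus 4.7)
I would establish the theorem by matching an upper and a lower bound for $k_p(v)$, using Riesz--Thorin interpolation between the trivial $L^2 \to L^2$ estimate (valid for all $k\ge 0$ by Plancherel, since the amplitude $a_k$ is bounded on $\Gamma$) and a sharp pointwise bound for the convolution kernel $K_k = F^{-1}[e^{i\varphi}a_k]$, which produces the $L^1\to L^\infty$ end of the interpolation scale.

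For the upper bound, I would invoke the normal form of Proposition \ref{normform} to write $\phi(x_1,x_2) = b(x_1,x_2)\bigl(x_2-\psi(x_1)\bigr)^2 + b_0(x_1)$, and then perform a dyadic decomposition of the amplitude $a_k$ in coordinates adapted to the weights $(m,n)$ of the singularity. The relevant anisotropic dilations rescale $x_1$ by $2^{-j/n}$ and the transverse variable either by $2^{-j/2}$ (in case (i), where the contact order of $\Sigma$ is governed by $n$) or by a weight combining $m$ and $n$ (in case (ii), where the curve $x_2=\psi(x_1)$ is strictly flatter than $b_0$). On each rescaled dyadic piece the phase has controllable derivatives, so stationary phase together with van der Corput's lemma yields oscillatory-integral bounds for the kernel; summing in $j$ with the correct weights gives the desired pointwise kernel estimate. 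Interpolation with the $L^2$ bound then produces the claimed upper bounds in both cases.

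The lower bound is obtained by testing $M_k$ on Knapp-type data adapted to the geometry of $\Sigma$ near $v$. In case (i), a single family of $L^p$-normalized characteristic functions of tubes with dimensions roughly $(\delta^{1/n},\delta^{1/2},\delta)$ forces $k_p(v) \ge (5-2/n)(1/p-1/2)$. In case (ii), since $2m<n$, the curve $\{x_2=\psi(x_1)\}$ is strictly flatter than $b_0$ and one must use \emph{two} independent Knapp constructions: one tube adjacent to the root curve $\{x_2=\psi(x_1)\}$, which sees the scale $m$, and a second tube aligned with the direction in which $b_0$ dominates, which sees the scale $n$. These two families produce, respectively, the two summands inside the maximum appearing in \eqref{II case}, so the max is necessary.

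The main obstacle, I expect, will be the careful scale-matching in case (ii). Near the curve $\{x_2 = \psi(x_1)\}$ the Hessian of $\phi$ degenerates, and the dyadic analysis must cleanly separate this thin region from its complement and then recombine the two contributions with compatible weights. The $R$-condition is precisely what kills the lower-order flat remainder of $b_0$ in the $C^\infty$ setting; without it, the $m$-scale and the $n$-scale regimes cannot be glued to yield a sharp exponent. A secondary technical point is passing from the Fourier-side Knapp examples to a lower bound that is genuinely sensitive to the localization $\supp(a_k)\subset \Gamma$; this is handled by a standard duality reformulation through extension operators on pieces of $\Sigma$, and also requires the hypothesis $m\ge 3$ in order that the relevant curvature behavior near the root curve be nondegenerate enough for the test-function computation to close.
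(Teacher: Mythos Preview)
Your lower-bound strategy is essentially the paper's: two Knapp-type families (anisotropic boxes at scales $(2^{-j/2},2^{-j/n},2^{-j})$ and a second family adapted to the root curve $x_2=\psi(x_1)$) produce exactly the two terms inside the maximum in \eqref{II case}. Case~(i) is also fine on the upper-bound side: the uniform decay $|I(\lambda,z)|\lesssim\lambda^{-(1/2+1/n)}$ combined with Plancherel yields $(5-2/n)(1/p-1/2)$ by two-point interpolation, just as you describe.

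The genuine gap is in your upper bound for case~(ii). Interpolating only between $L^2\to L^2$ (where $k=0$ suffices) and an $L^1\to L^\infty$ bound coming from a pointwise estimate on $K_k$ can produce at best a single line through the origin in the variable $t=\tfrac1p-\tfrac12$. But the right-hand side of \eqref{II case} is a \emph{broken} line: the second term $(6-2(m+1)/n)t-\tfrac12+\tfrac{m}{n}$ has a nonzero affine offset and is not attainable from those two endpoints alone. The paper handles this by proving a third estimate, namely an $L^{m+1}$ bound on the oscillatory integral,
\[
\|I(\lambda,\cdot)\|_{L^{m+1}(\mathbb{R}^2)}\lesssim \lambda^{-(\frac12+\frac{2}{m+1})+\varepsilon},
\]
which via Sugimoto's $L^q$ criterion (Proposition~\ref{Sugi1}) yields $L^{p_0}\to L^{p_0'}$ boundedness at $p_0=\tfrac{2m+2}{2m+1}$ for $k>\tfrac52-\tfrac{3}{m+1}$. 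A short computation shows that $p_0$ is exactly the corner where the two affine pieces in \eqref{II case} meet; three-point interpolation among $p=2$, $p=p_0$, and $p=1$ then gives the full broken line. This $L^{m+1}$ estimate (the paper's Lemma~\ref{aux}) is where the real work lies, and it is also where the hypotheses $m\ge3$ and the $R$-condition are actually used (the former to invoke the Randol-type integrability from \cite{akr}, the latter to dispose of the flat remainder when $n=\infty$); they are not needed for the Knapp lower bounds, contrary to what you suggest.
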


\begin{remark}\label{to main}
Note that in the case (i) formally it is possible $m=\infty$ e.g. the $\psi$ can be a flat function. M. Sugimoto \cite{sugumoto98} suggested the example:
\begin{equation}\label{examplesug}
\phi_I(y)=1-(y_2^2-y_1^n),
\end{equation}
which corresponds to the case (i), with $\psi(y_1)\equiv0$. From our results it follows that the Sugimoto result is sharp in that case.
Moreover, the Sugimoto result, for  a surface of the class I with order $n$, is sharp if and only if $2m \ge n$.

Note that the first case (i) is agree with  the so-called linearly adapted condition introduced in the monograph  \cite{IMmon} (see also \cite{IM-uniform}). Also note that under the linearly adapted case the sharp uniform estimate for the Fourier transform of measures gives the sharp bound for the exponent $p$ in the $L^p\mapsto L^2$ Fourier  restriction estimate. As had been shown in   \cite{IMmon} it is only the case.

If $n=\infty$ e.g. if  $b_0$ is a flat function at the origin then so is $\psi$, under the condition $2m\ge n$. Hence, the Sugimoto result is sharp in that case also, in other words,  his results are sharp for arbitrary smooth surface of  the class III.

On the other hand if $2m<n< \infty$ then the result of Sugimoto \cite{sugumoto98} is not sharp for the hypersurfaces $\Sigma$ of the class I.
Our results show that one can not be ignored influence of the number $m$ for the surfaces of the class I.

For the case $n=\infty$ e.g. for hypersurfaces of the class II M. Sugimoto obtained the sharp bound for a subclass of  analytic surfaces  of the class II.
  It turns out that the analogical result holds true for arbitrary analytic hypersurfaces of the class II and also for arbitrary smooth surfaces of the class II under the $R-$ condition.
 More precisely, from our result it follows that actually  the statement of the   Theorem 2 proved by M. Sugimoto in the paper \cite{sugumoto98}  (page no. 396) holds true for arbitrary analytic hypersurface having type II and also for analogical  smooth hypersurfaces under the R-condition.

\end{remark}

The paper organized as follows, in the next section 2 we give preliminary results on relations between decay rate of oscillatory integrals and upper estimates for the number $k_p(v)$. Then we obtain an upper bound for the number $k_p(v)$, for each class of surfaces in the section 3. Finally, in the section 4 we give a lower bounds for the number $k_p(v)$, which are agree with the upper bound.
The results of the last section 4
finish a proof of the main Theorem \ref{main}.

{\bf Conventions:}  Throughout this article, we shall use the variable constant notation,
i.e., many constants appearing in the course of our arguments, often denoted by
$c, C, \varepsilon, \delta$; will typically have different values at different lines. Moreover, we shall use symbols
such as $\sim, \lesssim;$ or $<<$ in order to avoid writing down constants, as explained in \cite{IMmon} (
Chapter 1). By $\chi_0$ we shall denote a non-negative smooth cut-off function on $\mathbb{R}^\nu$ with typically
small compact support which is identically $1$ on a small neighborhood of the origin, and also  $\chi_1(x):=\chi_0(x)-\chi_0(2x)$.

\section{Preliminaries}

Note that the  boundedness problem for the convolution operators is related to behaviour  of the following convolution kernel:
\begin{eqnarray}\nonumber
K_k:=F^{-1}(e^{i\varphi(\xi)}a_k(\xi)).
\end{eqnarray}
We define the Fourier operator and its inverse by the following \cite{stein-book}:
\begin{eqnarray}\nonumber
F(u)(\xi):=\frac1{\sqrt{(2\pi)^\nu}}\int_{\mathbb{R}^\nu} e^{ i\xi \cdot x}u(x)dx,
\end{eqnarray}
and
\begin{eqnarray}\nonumber
u(x):=\frac1{\sqrt{(2\pi)^\nu}}\int_{\mathbb{R}^\nu} e^{-i \xi\cdot x} F(u)(\xi)d\xi
\end{eqnarray}
respectively for a Schwartz function $u$, where $\xi\cdot x$ is the  usual inner product of the vectors $\xi$ and $x$. Then there are  defined for distributions by the standard arguments.

It is well known that (see \cite{sugumoto98}) the main contribution to $K_k$ gives points $x$ which belongs to a sufficiently small neighborhood of the set $-\nabla \varphi(supp(a_k)\setminus\{0\})$.

In the paper \cite{sugumoto98} had been shown relation between the boundedness of the convolution operator $M_{k}$ and behaviour of the following oscillatory integral:
\begin{eqnarray*}\nonumber
I(\lambda, z)=\int_{\mathbb{R}^{\nu-1}}e^{i\lambda(z\cdot x+\phi(x))}g(x)dx,
 (\lambda>0,\, z\in \mathbb{R}^{\nu-1}),
\end{eqnarray*}
where $ g\in C_{0}^{\infty}(U)$ and $U$  is a sufficiently small neighborhood of the origin.

More precisely there were proved the following statements \cite{sugumoto98}:

 \begin{proposition} \label{Sugi1}
 Let $q\geq 2$ and $\alpha\geq 0$.  Suppose for all $g\in C_{0}^ {\infty}(U)$ and $\lambda>1$,
\begin{equation}\label{averbound}
\|I(\lambda,\cdot)\|_{L^{q}(\mathbb{R}_{z}^{\nu-1})}\leq C_{g}\lambda^{-\alpha},
\end{equation}
where $C_{g}$ is independent of $\lambda$. Then
$K_{k}(\cdot):=F^{-1}[e^{i\varphi (\xi)}a_{k}(\xi)](\cdot)\in
L^{q}(\mathbb{R}^{\nu})$ and $M_{k}:L^{p}(\mathbb{R}^{\nu}) \rightarrow
L^{p'}(\mathbb{R}^{\nu})$ bounded for $p=\frac{2q}{2q-1}$, if
$k>\nu-\alpha-\frac{1}{q}$.
 \end{proposition}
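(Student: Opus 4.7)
The plan is to prove $K_k\in L^q(\mathbb{R}^\nu)$ via a radial Littlewood--Paley decomposition of the amplitude $a_k$, estimate each dyadic piece by relating it to the oscillatory integral $I(\lambda,z)$, and then deduce the $L^p\mapsto L^{p'}$-boundedness of $M_k$ from Young's convolution inequality.

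First I would decompose $a_k=a_{k,0}+\sum_{j\geq 1}a_{k,j}$ with $a_{k,0}$ smooth and supported near the origin and $a_{k,j}(\xi):=a_k(\xi)\chi_1(2^{-j}\xi)$ supported on the shell $|\xi|\sim 2^j$. The contribution $K_{k,0}$ is Schwartz, so lies in every $L^q$; the bulk of the work is in the dyadic pieces $K_{k,j}$ for $j\gg 1$, for which we may use the homogeneity of $a_k$. Substituting $\xi=2^j\eta$ yields
\begin{equation*}
K_{k,j}(x)=c\,2^{j(\nu-k)}\int e^{i2^j(\varphi(\eta)-x\cdot\eta)}\tilde a_k(\eta)\chi(\eta)\,d\eta.
\end{equation*}
Next I would introduce the surface-adapted coordinates $\eta=\eta_\nu(y,1)$, $y\in U\subset\mathbb{R}^{\nu-1}$, so that $\varphi(\eta)=\eta_\nu\Phi(y)$ where $\Phi(y):=\varphi(y,1)$ satisfies $\Phi(0)=1$, $\nabla\Phi(0)=0$, and $\tilde\phi:=\Phi-1$ shares the singularity type of the height function $\phi$. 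Writing $x=(x',x_\nu)$ and $\lambda=2^j\eta_\nu$, the phase rearranges to $\lambda(1-x_\nu)+\lambda(-x'\cdot y+\tilde\phi(y))$. After absorbing the Jacobian $\eta_\nu^{\nu-1}$ and the homogeneity factor $\eta_\nu^{-k}$, one sees that $K_{k,j}(x)$ equals, up to a unimodular factor in $x_\nu$, the one-dimensional inverse Fourier transform in $\eta_\nu$, evaluated at $2^j(x_\nu-1)$, of
\begin{equation*}
\eta_\nu\mapsto\tilde\chi(\eta_\nu)\,\eta_\nu^{\nu-1-k}\,I(2^j\eta_\nu,-x').
\end{equation*}

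The core estimate is then the dyadic $L^q$ bound. Fix $x'$ and apply the Hausdorff--Young inequality in $\eta_\nu$ (permitted by $q\geq 2$), followed by the rescaling $\lambda=2^j\eta_\nu$, to get
\begin{equation*}
\|K_{k,j}(x',\cdot)\|_{L^q_{x_\nu}(\mathbb{R})}\lesssim 2^{j(\nu-k-1)}\Bigl(\int_{\lambda\sim 2^j}|I(\lambda,-x')|^{q'}\,d\lambda\Bigr)^{1/q'}.
\end{equation*}
Raising to the $q$th power, integrating in $x'\in\mathbb{R}^{\nu-1}$, and invoking Minkowski's integral inequality (valid since $q/q'=q-1\geq 1$) together with the hypothesis $\|I(\lambda,\cdot)\|_{L^q_z}\leq C\lambda^{-\alpha}$ gives
\begin{equation*}
\|K_{k,j}\|_{L^q(\mathbb{R}^\nu)}\lesssim 2^{j(\nu-k-1/q-\alpha)}.
\end{equation*}
The geometric sum converges in $j$ precisely when $k>\nu-\alpha-1/q$, yielding $K_k\in L^q(\mathbb{R}^\nu)$.

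Finally, since $M_kf=K_k\ast f$, Young's convolution inequality $L^p\ast L^q\hookrightarrow L^{p'}$ holds as soon as $1/p'=1/p+1/q-1$, i.e.\ $p=2q/(2q-1)$, which is the claimed relation. The main technical point will be the bookkeeping of exponents across the radial/angular split, in particular verifying that the rescaling $\lambda=2^j\eta_\nu$ converts a one-dimensional Hausdorff--Young bound on a compact $\eta_\nu$-interval into the $L^{q'}$-integral over $\lambda\sim 2^j$ that can then be traded, via Minkowski, for the hypothesised angular $L^q$ bound on $I(\lambda,\cdot)$.
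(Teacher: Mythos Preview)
The paper does not supply its own proof of this proposition; it is quoted from Sugimoto \cite{sugumoto98}. Your outline is essentially Sugimoto's argument: dyadic decomposition of the symbol, a ``polar'' change of variables on each shell to separate a radial Fourier integral from an angular oscillatory integral, Hausdorff--Young in the radial variable, Minkowski to swap the order of norms, and finally Young's inequality for the convolution. The exponent bookkeeping you record is correct and produces exactly the threshold $k>\nu-\alpha-\tfrac1q$.

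One technical point needs attention. With your coordinates $\eta=\eta_\nu(y,1)$ the inner $y$-integral carries the phase $\tilde\phi(y)-x'\cdot y$, where $\tilde\phi:=\varphi(\cdot,1)-1$, whereas the hypothesis \eqref{averbound} is formulated for $I(\lambda,z)$ built from the graph function $\phi$. Saying that $\tilde\phi$ ``shares the singularity type'' of $\phi$ is true but does not by itself transfer the assumed $L^q$ bound. The standard fix---and the change of variables the present paper uses throughout Section~\ref{sharp}---is $\xi=\lambda\bigl(y,1+\phi(y)\bigr)$ with $\lambda=\varphi(\xi)$: the inner phase is then $-\lambda\bigl(x'\cdot y+x_\nu\phi(y)\bigr)$, which one rewrites as the value of $I$ at $(\lambda x_\nu,-x'/x_\nu)$ after rescaling, and the spurious factor $x_\nu$ is harmless once $x$ has been localised to a neighbourhood of $-\nabla\varphi(\mathrm{supp}\,a_k)$ (away from which non-stationary phase gives rapid decay, as the paper remarks just before the proposition). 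A second, smaller point: the amplitude of the inner integral depends on the radial parameter, so you are implicitly using that the constant $C_g$ in \eqref{averbound} is controlled by finitely many $C_0^\infty$-seminorms of $g$, giving uniformity over the bounded family of amplitudes that arises. With these adjustments the argument is complete.
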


Also,  M. Sugimoto proved another version of the Proposition \ref{Sugi1} in the case $q=\infty$. One can define
\begin{eqnarray*}\nonumber
K_{k, j} (x)=F^{-1}[e^{i\varphi(\xi)}a_k (\xi) \Phi_j (\xi)](x).
\end{eqnarray*}
Here $\{\Phi_j (\xi)\}_{j=1}^\infty$  is a Littlewood-Paley partition of unity which is used to
define the norm
\begin{eqnarray*}\nonumber
\|v\|_{B^s_{
p, q}}:= \left(\sum_{j=0}^\infty
(2^{ js} \|F^{-1}(\Phi_j (\xi)F(v) \|_{L^p})^q\right)^\frac1q
\end{eqnarray*}
of Besov space $B^s_{p, q}$ (see \cite{Bergh}).

\begin{proposition}\label{Sugi2}
 Let  $\alpha\ge0$. Suppose, for all $g\in C^\infty_0(U)$ and  $\lambda>1$,
 \begin{equation}\label{vander}
\|I(\lambda; \cdot)\|_{L^\infty(\mathbb{R}^{\nu-1}_z)} \le C_g  \lambda^{-\alpha},
\end{equation}
where $C_g$ is independent of $\lambda$. Then $\{K_{k, j} (x)]\}_j^\infty$ is bounded in $L^\infty(\mathbb{R}^\nu)$, if
$k=\nu-\alpha$. Hence $M_k$ is $L^p\mapsto L^{p'}$ bounded, if $k>(2\nu-2\alpha)(\frac1{p}-\frac12)$. This
inequality can be replaced by an equation, if $p\neq1$.
\end{proposition}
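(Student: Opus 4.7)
The plan is to rescale $K_{k,j}$ into an oscillatory integral at frequency $\lambda=2^j$, reduce its $L^\infty_x$ estimate to the hypothesis on $I(\lambda,z)$, and then obtain the $L^p \to L^{p'}$ bound by interpolation with the trivial $L^2$ estimate.

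First I would substitute $\xi = 2^j \eta$ in the defining integral. Since $\varphi$ is positively homogeneous of degree $1$ and $a_k$ is positively homogeneous of degree $-k$ for large $\xi$, for $j$ large this produces
$$K_{k,j}(x) = c\, 2^{j(\nu-k)} \tilde K(2^j, x), \qquad \tilde K(\lambda,x) := \int e^{i\lambda(\varphi(\eta)-x\cdot\eta)} a_k(\eta)\Phi(\eta)\,d\eta,$$
where $\Phi$ is a fixed $C^\infty_c$ cutoff supported in the intersection of an annulus with a small conic neighborhood of $v$. It suffices to prove $\|\tilde K(\lambda,\cdot)\|_\infty \lesssim \lambda^{-\alpha}$, because for $k=\nu-\alpha$ the prefactor $2^{j(\nu-k)}=2^{j\alpha}$ exactly cancels the decay.

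The main step is the reduction to the hypothesis. I would introduce ``radial $+$ graph'' coordinates $\eta = r(\mu',1+\phi(\mu'))$ with $r>0$ and $\mu' \in U$; then $\varphi(\eta)=r$, $d\eta = r^{\nu-1}J(\mu')\,dr\,d\mu'$ with $J(0)=1$, and $a_k(\eta)=r^{-k}a_k(\mu',1+\phi(\mu'))$. The phase factors as
$$\lambda(\varphi(\eta)-x\cdot\eta) \;=\; \lambda r(1-x_\nu) \;+\; \tilde\lambda\bigl[z\cdot\mu' - \phi(\mu')\bigr],$$
where $\tilde\lambda := \lambda r x_\nu$ and $z := -x'/x_\nu$, which are well defined on the fixed neighborhood $\mathcal{N}$ of the focal point $(0,\ldots,0,1)$ in which $|x_\nu|$ is bounded below. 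For such $x$, the inner $\mu'$-integral at fixed $r$ is, up to a unimodular factor and complex conjugation, an instance of $I(\tilde\lambda,z)$, so the hypothesis gives a bound $\lesssim|\tilde\lambda|^{-\alpha}$ with constants uniform in $r$ (the amplitude depends smoothly on $r$ over a compact interval). Integrating in $r$ and using $|x_\nu|\sim 1$ on $\mathcal{N}$ yields $|\tilde K(\lambda,x)| \lesssim \lambda^{-\alpha}$. For $x\notin\mathcal{N}$, the phase gradient $\nabla\varphi(\eta)-x$ is bounded below uniformly on the compact amplitude support, so iterated integration by parts gives $|\tilde K(\lambda,x)|\lesssim_N \lambda^{-N}$ for every $N$. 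Combining both regions establishes $\|K_{k,j}\|_\infty \lesssim 2^{j(\nu-k-\alpha)}$, which is $O(1)$ when $k=\nu-\alpha$.

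For the $L^p\to L^{p'}$ conclusion I would combine the above with the Plancherel estimate $\|M_{k,j}\|_{L^2\to L^2}\leq \|a_k\Phi_j\|_\infty\lesssim 2^{-jk}$ and interpolate via Riesz--Thorin to get $\|M_{k,j}\|_{L^p\to L^{p'}} \lesssim 2^{j[(2\nu-2\alpha)(1/p-1/2)-k]}$; geometric summation over $j$ then yields boundedness of $M_k=\sum_j M_{k,j}$ whenever $k>(2\nu-2\alpha)(1/p-1/2)$. For the endpoint equation when $p>1$, the symbols $e^{i\varphi}a_k\Phi_j$ have essentially disjoint annular supports, so the Littlewood--Paley square-function characterisation of $L^{p'}$ (valid for $1<p'<\infty$) upgrades the uniform per-dyadic bound to boundedness of the whole sum. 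The main technical obstacle I anticipate is the reduction step: matching the $\mu'$-integrand to the hypothesis requires careful tracking of signs (the hypothesis is stated for $+\phi$, but the $A_n$ structure of Proposition \ref{normform} is preserved under $\phi\mapsto-\phi$, so the same bound applies with either sign of $\phi$ and either sign of $x_\nu$) together with a clean excision of the non-focal region, where the radial-graph parameterisation degenerates and one must revert to the non-stationary phase estimate in the full variable $\eta$.
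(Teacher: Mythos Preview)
The paper does not give its own proof of this proposition; it is quoted from Sugimoto \cite{sugumoto98} as a preliminary tool (see the sentence ``there were proved the following statements \cite{sugumoto98}'' preceding Propositions~\ref{Sugi1} and~\ref{Sugi2}), so there is no in-paper argument to compare against. Your proposal follows what is essentially the standard route, and presumably Sugimoto's: dyadic rescaling $\xi=2^j\eta$, passage to the cone coordinates $\eta=r(\mu',1+\phi(\mu'))$ over $\Sigma$ to reduce the $L^\infty_x$ kernel bound to the hypothesis on $I(\lambda,z)$, nonstationary phase away from the focal set, and then Riesz--Thorin interpolation between the resulting $L^1\to L^\infty$ bound and the Plancherel $L^2\to L^2$ bound.

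Two small remarks. First, your treatment of the sign mismatch (phase $-\phi$ instead of $+\phi$) via ``the $A_n$ structure is preserved'' is more than necessary: the complex conjugation you already invoke suffices on its own, since $|\overline{I(\lambda,z)}|=|I(\lambda,z)|$ and the hypothesis is an $L^\infty_z$ bound. Second, for the endpoint $k=(2\nu-2\alpha)(\tfrac1p-\tfrac12)$ with $1<p\le2$, the reason the paper introduces Besov norms just before this proposition is that the cleanest packaging of your square-function step is the chain $L^p\hookrightarrow B^0_{p,2}$, the uniform per-dyadic bound yielding $M_k:B^0_{p,2}\to B^0_{p',2}$ (since $M_k$ commutes with the Littlewood--Paley projections), and $B^0_{p',2}\hookrightarrow L^{p'}$; your Littlewood--Paley formulation is equivalent to this.
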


\section{An upper bound for the number $k_p(v)$}

Note that we dealt with two-dimensional oscillatory integral $I(\lambda, z)$ e. g. $\nu=3$.
 If $\phi$ has singularity of type $A_{n-1}$ with $2\le n\le \infty$ at the origin
 and $|z|>\delta$ (where $\delta$ is a fixed positive number) then the phase function $\phi(x_1, x_2)+x\cdot z$ has no critical points provided $U$ is a sufficiently small neighborhood of the origin and $g\in C_0^\infty(U)$.
 Therefore we can use integration by parts arguments and obtain:
  \begin{eqnarray*}\nonumber
|I(\lambda, z)|\lesssim \frac1{|z\lambda|^2},
\end{eqnarray*}
 which is better than wanted.

 Further, we will assume that $|z|<<1$ and $U$ is a sufficiently small neighborhood of the origin.
 Then
 we can use stationary phase method with $x_2$ variable and obtain:
\begin{eqnarray*}\nonumber
I(\lambda, z)=\frac{C}{\lambda^\frac12}\int_{\mathbb{R}}e^{i\lambda(\phi_1(x_1, z_2)+z_2x_1^m\omega(x_1)+z_1x_1))}g(x_2^c(x_1, z_2), x_1)dx_1+ R(\lambda, z),
\end{eqnarray*}
 where $R$ is a remainder term satisfying the estimate $|R(\lambda, z)|\lesssim \lambda^{-\frac32}$ and $x_2^c(x_1, z_2)$ is the unique critical point of the phase function with respect to $x_2$. Moreover the phase function
 $\phi_1(x_1, z_2)$ can be written as:
\begin{eqnarray*}\nonumber
 \phi_1(x_1, z_2)=z_2^2B(z_2)+z_2^2x_1q(x_1, z_2),
 \end{eqnarray*}
 where $B, q$ are smooth functions with $B(0)\neq0$ (see \cite{BIM22}).

Then by using the Van der Corput type lemma  \cite{arhipov}  (see also \cite{duistermaat} for estimates with  more general phase function) we see that   the estimate \eqref{vander} holds true with $\alpha=\frac12+\frac1n$. In this case we can use the Proposition \ref{Sugi2} and have the following upper  bound for $k_p(v)$:
\begin{equation}\label{upbound}
k_p(v)\le \left(5-\frac2n\right)\left(\frac1p-\frac12\right).
\end{equation}
This case includes also the class of surfaces  type III e.g. the case $n=\infty$.  Note that the upper bound \eqref{upbound} does not depend on the number $m$. It turns out that, it is the sharp bound for  the $k_p(v)$ under the condition $2m\ge n$.

Now, we consider the more subtle case $2m<n$. In this case we use the following Lemma (compare with the Theorem 2 of \cite{sugumoto98}):

\begin{lemma}\label{aux}
Let $\phi$ be a smooth function satisfying the $R-$condition,  in which $2m< n\le \infty$ and $m\geq 3$ and also $\varepsilon>0$ be a fixed positive number.  Then
the following estimate
$$\|I(\lambda,\cdot)\|_{L^{m+1}(\mathbb{R}^2)}\leq
C\lambda^{-(\frac{1}{2}+\frac{2}{m+1})+\varepsilon}.$$
holds true.
\end{lemma}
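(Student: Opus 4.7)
The plan is to reduce the 2D integral $I(\lambda,z)$ to a 1D oscillatory integral via stationary phase in $x_2$, and then estimate its $L^{m+1}$ norm in $z$ by combining pointwise Van der Corput and stationary-phase bounds over an adapted decomposition of the $(z_1,z_2)$-plane.

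The initial reduction is already in the excerpt: since $\partial_2^2\phi(0,0)\neq 0$, stationary phase in $x_2$ gives $I(\lambda,z)=\lambda^{-1/2}J(\lambda,z)+R(\lambda,z)$ with $|R|\lesssim \lambda^{-3/2}$, where
$$J(\lambda,z)=\int_{\mathbb{R}} e^{i\lambda\Phi(x_1,z)}h(x_1,z_2)\,dx_1,\qquad \Phi(x_1,z)=z_1x_1+z_2x_1^m\omega(x_1)+b_0(x_1)+z_2^2x_1q(x_1,z_2)+z_2^2B(z_2).$$
The $x_1$-independent term is unimodular and $R$ contributes at most $\lambda^{-3/2}$ to the $L^{m+1}$ norm on $|z|\lesssim 1$, easily absorbed since $3/2\ge 1/2+2/(m+1)$. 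The hypothesis $2m<n$ keeps $b_0(x_1)=x_1^n\beta(x_1)$ subordinate to $z_2x_1^m\omega(x_1)$ at the critical points of interest, while $z_2^2x_1q$ acts as a harmless $O(z_2^2)$ shift of $z_1$.

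For $J$ I would use two complementary pointwise bounds. After splitting $\supp h$ along $|x_1|\sim|z_2|^{1/(n-m)}$ one checks $|\partial_{x_1}^m\Phi|\gtrsim |z_2|$ (the $m!z_2\omega(0)$ term dominates on the inner part, and $|\partial_{x_1}^m b_0|\sim|x_1|^{n-m}\gtrsim|z_2|$ on the outer part), yielding by Van der Corput $|J|\lesssim (\lambda|z_2|)^{-1/m}$. When $|z_1|\lesssim |z_2|$ there is a unique non-degenerate critical point $x_1^*\in\supp h$ with $|x_1^*|\sim(|z_1|/|z_2|)^{1/(m-1)}$ and $|\partial_{x_1}^2\Phi(x_1^*)|\sim |z_2|^{1/(m-1)}|z_1|^{(m-2)/(m-1)}$, so stationary phase yields $|J|\lesssim \lambda^{-1/2}|z_2|^{-1/2}(|z_1|/|z_2|)^{-(m-2)/(2(m-1))}$. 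In the complementary region $|z_1|\gtrsim |z_2|$ the $m$-dominated critical point leaves $\supp h$, but this formula still serves as a uniform upper envelope (any $n$-dominated stationary-phase contribution is pointwise smaller, as discussed below), while for $|z_1|\gtrsim 1$ iterated IBP yields rapid decay. Splitting $\{|z|\lesssim 1\}$ accordingly into (A) $|z_2|\le\lambda^{-1}$ (trivial bound, contribution $\lesssim\lambda^{-2}$), (B) $|z_2|\ge\lambda^{-1}$ with $|z_1|\le\lambda^{-(m-1)/m}|z_2|^{1/m}$ ($m$-th derivative bound is sharper, contribution $\lesssim\lambda^{-2}\log\lambda$), and (C) $|z_2|\ge\lambda^{-1}$ with $|z_1|\ge \lambda^{-(m-1)/m}|z_2|^{1/m}$ (stationary-phase envelope), one finds in (C) that the $z_1$-integral encounters $|z_1|^{-\beta}$ with $\beta=(m-2)(m+1)/(2(m-1))$. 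The assumption $m\ge 3$ is exactly what makes $\beta\ge 1$ (equality at $m=3$), so this integral is controlled by its lower endpoint, again giving contribution $\lesssim\lambda^{-2}\log^2\lambda$. Summing, $\int|J|^{m+1}dz\lesssim \lambda^{-2}\log^2\lambda$, whence
$$\|I(\lambda,\cdot)\|_{L^{m+1}}\lesssim \lambda^{-1/2-2/(m+1)}(\log\lambda)^{2/(m+1)}\lesssim \lambda^{-1/2-2/(m+1)+\varepsilon}.$$

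The principal technical difficulty is to justify that $b_0$ does not disrupt the critical-point analysis underpinning the stationary-phase envelope. In the $n$-dominated regime $|z_1|\gtrsim|z_2|^{(n-1)/(n-m)}$ the critical point is driven by $b_0'$ rather than by $z_2x_1^{m-1}$ and $|\partial_{x_1}^2\Phi(x_1^*)|\sim|z_1|^{(n-2)/(n-1)}$; a direct comparison of exponents, for which the inequality $n>2m$ is exactly what is needed, shows that the resulting bound $\lambda^{-1/2}|z_1|^{-(n-2)/(2(n-1))}$ is pointwise dominated by the extended $m$-dominated envelope used above. Under the $R$-condition with $n=\infty$ we have $b_0\equiv 0$ and this step trivializes; for finite $n>2m$ it must be tracked carefully through the three-region decomposition, but no new technical inputs beyond this bookkeeping are needed.
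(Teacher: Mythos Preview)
Your overall plan---stationary phase in $x_2$, then pointwise envelopes for the one-dimensional integral $J$ integrated over an adapted decomposition of the $z$-plane---is close in spirit to the paper's argument, but the stationary-phase envelope you use in region (C) is not a valid pointwise upper bound. You assert a unique non-degenerate critical point with $|\partial_{x_1}^2\Phi(x_1^*)|\sim|z_2|^{1/(m-1)}|z_1|^{(m-2)/(m-1)}$ and then extend the resulting bound across all of (C), claiming the $n$-dominated contribution is always smaller. This overlooks the $A_2$ fold that occurs (for one sign of $z_2$) along the curve $|z_1|\sim|z_2|^{(n-1)/(n-m)}$, where two critical points coalesce and the second derivative vanishes. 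In the rescaled variable $\tilde z_1=z_1/|z_2|^{(n-1)/(n-m)}$ the correct bound near the fold carries a factor $|\tilde z_1-\tilde z_1^0|^{-1/4}$ (Airy-type), which exceeds your envelope $|\tilde z_1|^{-(m-2)/(2(m-1))}$ near $\tilde z_1^0\neq0$ by an arbitrarily large factor; in particular your integration of $|J|^{m+1}$ over (C) is based on a false inequality. The paper handles exactly this point by invoking Theorem~1 of \cite{akr}: after the rescaling $x_1=|z_2|^{1/(n-m)}y_1$ one gets $|J|\le|z_2|^{1/(n-m)}\psi(\tilde z_1,z_2)(\lambda|z_2|^{n/(n-m)})^{-1/2}$ with $\psi\in L^{2(m-1)/(m-2)-0}$, a statement that already incorporates both the $A_{m-1}$ singularity at $\tilde z_1=0$ and the $A_2$ folds at $\tilde z_1^0$. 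This is then interpolated against the uniform $\tilde\lambda^{-1/m}$ bound to land in $L^{m+1-0}$, producing the $\varepsilon$-loss. So your claim that ``no new technical inputs beyond this bookkeeping are needed'' is not correct: either the \cite{akr} input or an explicit Airy analysis at the fold is required.

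There is also a smaller slip in region (A): the trivial bound $|J|\le C$ gives a contribution of order $|\{|z|\lesssim1,\ |z_2|\le\lambda^{-1}\}|\sim\lambda^{-1}$, not $\lambda^{-2}$. This is repairable (split further using the $n$-dominated stationary-phase bound $\lambda^{-1/2}|z_1|^{-(n-2)/(2(n-1))}$ for $|z_1|\ge\lambda^{-(n-1)/n}$ together with the $\lambda^{-1/n}$ bound below that, or integration by parts when $n=\infty$), but it is not the one-line step you indicate.
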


\begin{proof} As noted before, we may assume that $|z|<<1$. So, in order to prove the Lemma \ref{aux} we will show validity of the following estimate:
 \begin{eqnarray*}\nonumber
\|I(\lambda,\cdot)\|_{L^{m+1}(V)}\leq
C|\lambda|^{-(\frac{1}{2}+\frac{2}{m+1})+\varepsilon},
\end{eqnarray*}
where $V$ is a sufficiently small neighborhood of the origin.

Moreover, due to the stationary phase arguments it is enough to  estimate the integral
\begin{eqnarray*}\nonumber
I_1(\lambda, z):=\int_{\mathbb{R}}e^{i\lambda(\Phi_1(x_1, z)}g(x_1, x_2^c(z_2, x_1))dx_1=:\int_{\mathbb{R}}e^{i\lambda(\Phi_1(x_1, z)}a(x_1, z_2)dx_1,
\end{eqnarray*}
where we used notation:
 \begin{eqnarray*}\nonumber
 a(x_1, z_2):=g(x_1, x_2^c(z_2, x_1)), \,
 \Phi_1(x_1, z):=x_1^n\beta(x_1)+z_2x_1^m\omega(x_1)+z_2^2x_1q(x_1, z_2)+z_1x_1.
\end{eqnarray*}

First, we assume that  $\{|z_2|<\delta|z_1|^{\frac{n-m}{n-1}}\}$,
where $\delta$ is a sufficiently small fixed number, which will be defined later.

If $\{\lambda|z_1|^{\frac{n}{n-1}}\leq
1\}$, then by using van der Corpute Lemma \cite{arhipov} we obtain:
\begin{equation}\label{4}
|I_1|\lesssim \frac{1}{|\lambda|^{\frac{1}{n}}}\leq\frac{1}
{|\lambda|^\frac{1}{n}(\lambda|z_1|^\frac{n}{n-1})^{\frac{2}{m+1}-\frac{1}{n}}}=
\frac{1}{|\lambda|^{\frac{2}{m+1}}|z_1|^{\frac{2n-m-1}{(n-1)(m+1)}}}.
\end{equation}

We show that actually, the estimate  \eqref{4} holds true for
$\lambda|z_1|^{\frac{n}{n-1}}>1$, whenever  $\delta$ is a sufficiently small positive number.

Indeed, we use change of variables $x_1=|z_1|^{\frac{1}{n-1}}y_1$ in the integral $I_1$
and denoting $y_1$ again by $x_1$ obtain:
$$I_1=|z_1|^{\frac{1}{n-1}}\int e^{i\lambda|z_1|^{\frac{n}{n-1}}\Phi_2(x_1, z)}a
(|z_1|^{\frac{1}{n-1}}x_1, z_2)dx_1,$$ where
 $$\Phi_2(x_1, z)=\beta(|z_1|^{\frac{1}{n-1}}x_1) x_{1}^{n}+\frac{z_2}{|z_1|^{\frac{n-m}{n-1}}}x_{1}^{m}\omega
 (|z_1|^{\frac{1}{n-1}}x_1)+\frac{z_2^{2}}{|z_1|}x_{1}q(|z_1|^\frac{1}{n-1}x_1, z_2)+
 sgn(z_1)x_1.$$

Note that
\begin{eqnarray*}\nonumber
\frac{|z_2|}{|z_1|^{\frac{n-m}{n-1}}}\le \delta <<1 \quad \mbox{and also}\quad    \frac{z_2^2}{|z_1|}\le \delta^2 |z_1|^\frac{n-2m+1}{n-1}<<1.
\end{eqnarray*}

There exists a number  $N$ such that the phase function  $\Phi_2$ has no critical point on the set  $\{|x_1|\ge N\}$.
Take a smooth non-negative function  $\chi_0$  such that
$$
\chi_0(x)=
 \begin{cases}
 1, \, \mbox{for} \quad |x|\leq 1\\
0, \,\mbox{for}\quad |x|>2.
\end{cases}
$$

We write the integral  $I_1$ as the sum of  two integrals by using the function $\chi_0$:
 \begin{eqnarray*}\nonumber
 I_1=|z_1|^{\frac{1}{n-1}}\int e^{i\lambda |z_1|^{\frac{n}{n-1}}\Phi_2(x_1, z)}
 a(|z_1|^{\frac{1}{n-1}}x_1, z_2)\chi_0\left(\frac{x_1}{N}\right)dx_1 +\\  |z_1|^{\frac{1}{n-1}}
 \int e^{i\lambda|z_1|^{\frac{n}{n-1}}\Phi_2(x_1, z)}a(|z_1|^{\frac{1}{n-1}}x_1, z_2)
 \left(1-\chi_0\left(\frac{x}{N}\right)\right)dx_{1}=:I_{11}+I_{12}.
  \end{eqnarray*}

We can use integration by parts formula in the integral  $I_{12}$ and obtain:
$$|I_{12}|\leq\frac{c|z_1|^{\frac{1}{n-1}}}{|\lambda|z_1|^{\frac{n-1}{n}}|}\leq
\frac{c|z_1|^{\frac{1}{n-1}}}{|\lambda|z_1|^{\frac{n}{n-1}}|^{\frac{2}{m+1}}}=
\frac{c}{|\lambda|^{\frac{2}{m+1}}|z_1|^{\frac{2n-m-1}{(n-1)(m+1)}}}.$$
 It is what we need. Surely, it coincides with the estimate \eqref{4}.

 Now, we consider estimate for the integral  $I_{11}$. The phase function of the integral can be considered as a small perturbation of the function
   $\beta(0)x_{1}^{n}+sgn(z_1)x_{1}$. Hence, there exists a positive number  $\delta>0$ such that the function $\Phi_2(x_1, z)$  has only non-degenerate critical points, whenever the parameter $z$ satisfies the condition: $|z_2|< \delta|z_1|^{\frac{n-m}{n-1}}$. Therefore we can use Van der Corpute type estimate and obtain:
 $$|I_{11}|\leq \frac{c|z_1|^{\frac{1}{n-1}}}{|\lambda|^{\frac{1}{2}}|z_1|^
 {\frac{n}{2(n-1)}}}\leq \frac{c}{|\lambda|^{\frac{2}{m+1}}|z_1|^
 {\frac{2n-m-1}{(n-1)(m+1)}}}.$$
 This completes a proof of the estimate \eqref{4} in the considered case.

Now, suppose  $\{|z_1|^{\frac{n-m}{n-1}}\leq
\frac{1}{\delta}|z_2|\}.$

If $|z_2|^{\frac{n}{n-m}}|\lambda|\leq 1$
then again by using Van der Corpute type  estimate  we obtain:
$$|I_{1}|\leq\frac{c}{|\lambda|^{\frac{1}{n}}}\leq\frac{c}
{|\lambda|^{\frac{1}{n}}(|z_2|^{\frac{n}{n-m}}|\lambda|)^
{\frac{2}{m+1}-\frac{1}{n}}}=\frac{c}{|\lambda|^{\frac{2}{m+1}}|z_2|^
{\frac{2n-m-1}{(n-m)(m+1)}}}.$$

 Finally, we consider the case
$|z_2|^{\frac{n}{n-m}}|\lambda|>1$, where we use essentially induction arguments.
In this case, it is natural to use change of variables $x_1 \mapsto |z_2|^{\frac{1}{n-1}}x_1$ in the integral
$I_1$ and one obtains:
$$I_1 =|z_2|^{\frac{1}{n-1}}\int
e^{i\lambda|z_2|^{\frac{n}{n-m}}\Phi_2(x_1,  z)}a(|z_2|^{\frac{1}{n-m}}x_1, z_2)dx_1,$$
where
\begin{eqnarray*}\nonumber
\Phi_{2}(x_1, z):=x^{n}_{1}\beta(|z_2|^{\frac{1}{n-m}}x_1)+sgn(z_2)x_{1}^{m}
\omega(|z_2|^{\frac{1}{n-m}}x_{1})+\\ |z_2|^{2-\frac{n-1}{n-m}}sgn(z_2)
x_{1}q(|z_2|^{\frac{1}{n-m}}x_1,z_2)+\frac{z_1}{|z_2|^{\frac{n-1}{n-m}}}x_1.
\end{eqnarray*}

There exists a positive number  $N$ such that the phase function  $\Phi_2$ has no critical points on the set  $\{|x_1|\geq N\}$.
Again, as before  we write the integral
 $I_1$ as the sum of two integrals  $I_{11}, \, I_{12}$  given by the formulas:
\begin{eqnarray*}\nonumber
I_{11}=|z_2|^{\frac{1}{n-m}}\int e^{i\lambda|z_2|^{\frac{n}{n-m}}\Phi_{2}(x_1, z)}a
(|z_2|^\frac{1}{n-m}x_1)\chi_0\left(\frac{x}{N}\right)dx,\\
|I_{12}|=|z_2|^{\frac{1}{n-m}}\int e^{i\lambda|z_2|^{\frac{n}{n-m}}\Phi_{2}(x_1, z)}
a(|z_2|^{\frac{1}{n-m}}x_1)\left(1-\chi_0\left(\frac{x}{N}\right)\right)dx.
\end{eqnarray*}

For the integral $I_{12}$ we get:
$$|I_{12}|\leq\frac{c|z_2|^{\frac{1}{n-m}}}{|\lambda|z_2|^{\frac{n}{n-m}}|}
\leq\frac{c|z_2|^{\frac{1}{n-m}}}{|\lambda|z_2|^{\frac{n}{n-m}}|^{\frac{2}{m+1
}}}=\frac{c}{|\lambda|^{\frac{2}{m+1}}|z_2|^{\frac{2n-m-1}{(n-m)(m+1)}}},$$
because on the support of the amplitude function of the integral $I_{12}$  the function $\Phi_{2}(x_1, z)$ has no critical points.

Finally, we consider estimate for the integral $I_{11}$. Note that
$\xi_{1}=\frac{z_1}{|z_2|^{\frac{n-1}{n-m}}}\in[-\frac{1}{\delta},
\frac{1}{\delta}]$. Since the interval $[-\frac{1}{\delta},
\frac{1}{\delta}]$ is the compact set then the required estimate follows from the local estimates.  Let $\xi_{1}=\xi_{1}^{0}$ be a fixed point of the interval $[-\frac{1}{\delta},
\frac{1}{\delta}]$. Further,  suppose  that the parameter $\xi_1$ changes in a sufficiently small neighborhood of the point $\xi_{1}^{0}$. Then the phase function $\phi_2$ can be considered as a small perturbation of the function
$$x_{1}^{n}\beta(0)+sgn(z_2)x_{1}^{m}\omega(0)+\xi_{1}^{0}x_1.$$

If $\xi_{1}^{0}\neq0$, then the phase function has only singularities of type
 $A_{k}$ with $(k\leq2)$. If $\xi_{1}^{0}=0$, Then the phase function has singularities of type  $A_{m-1}$  at the origin. In particular, if $2\le m\le3$ then the phase function has only singularities of type $A_{k}$ with $k\le2$.  Therefore due to the Theorem 1 of the paper \cite{akr}  there exists a function $\psi(\xi_1,z_2)\in L^{\frac{2(m-1)}{m-2}-0}:=\cap_{p<\frac{2(m-1)}{m-2}} L^p$
 such that  the following estimate:
 \begin{equation}\label{last}
 |I_{11}|\leq\frac{|z_2|^{\frac{1}{n-m}}\psi\left(\frac{z_1}{|z_2|^{\frac{n-1}{n-m}}},
 z_2\right)}{\lambda^{\frac{1}{2}}|z_2|^{\frac{n}{2(n-m)}}}=
 \frac{\psi\left(\frac{z_1}{|z_2|^{\frac{n-1}{n-m}}},z_2\right)}{|\lambda|
 ^{\frac{1}{2}}|z_2|^{\frac{n-1}{2(n-m)}}}
 \end{equation}
holds true for the integral  $I_{11}$, whenever $m\ge3$. If $m=2$ then there exists a function $\psi(\xi_1,z_2)\in L^{4-0}$ such that the estimate
 \eqref{last} holds true with the function $\psi$.

On the other hand the Van der Corpute Lemma yields:
 $$|I_{11}|\leq\frac{c|z_2|^{\frac{1}{n-m}}}{|\lambda z_2^{\frac{n}{n-m}}|^
 {\frac{1}{m}}}.$$
 By interpolation the two bounds we get:
 $$|I_{11}|\leq\frac{c\psi\left(\frac{z_1}{|z_2|^{\frac{n-1}{n-m}}},z_2\right)^{\frac{2(m-1)}{(m-2)(m+1)}}}
 {\lambda^{\frac{2}{m+1}}|z_2|^{\frac{2n-m-1}{n-m}}}.$$

Thus, for the integral  $I_1$  we have the estimate:
 $$|I_1|\leq\frac{c\chi_{|z_2|<\delta|z_1|^{\frac{n-m}{n-1}}}(z_2)}
 {|\lambda|^{\frac{2}{m+1}}|z_1|^{\frac{2n-m-1}{(n-1)(m+1)}}}+
 \frac{c\chi_{\delta|z_1|^{\frac{n-m}{n-1}}<|z_2|}(z_1)\psi\left(\frac{z_1}
 {|z_2|^{\frac{n-1}{n-m}}},z_2\right)^{\frac{2(m-1)}{(m-2)(m+1)}}}{|\lambda|
 ^{\frac{2}{m+1}}|z_2|^{\frac{2n-m-1}{(n-m)(m+1)}}}=:\frac{\widetilde{\psi}
 (z_1,z_2)}{|\lambda|^{\frac{2}{m+1}}}.$$

 Now, we show that  $\widetilde{\psi}\in L^{m+1-0}(V)$.
Indeed, let $1<p<m+1$ be a fixed number. Then
 $$2\int_{|z_1|<1}\frac{dz_1}{|z_1|^{\frac{2n-m-1}{(n-1)(m+1)}}p}
 \int_{0}^{\delta|z_1|^{\frac{n-m}{n-1}}}dz_2= 4\delta\int_{0}^{1}
 \frac{dz_1}{z_{1}^{\frac{2n-m-1}{(n-1)(m+1)}p-\frac{n-m}{n-1}}}.$$
 Obviously, the last integral converges, whenever $p<m+1$.
 Moreover, for $1<p<m+1$
 \begin{eqnarray*}\nonumber
 \int_{V}\frac{\psi(\frac{z_1}{|z_2|^{\frac{n-1}{n-m}}},z_2)
 ^{\frac{2(m-1)p}{(m-2)(m+1)}}\chi_{c|z_1|^{\frac{n-m}{n-1}}<|z_2|}(z_1)}
 {|z_2|^{\frac{2n-m-1}{(n-m)(m+1)}p}}dz_{1}dz_2=\\
 =\int_{0}^{1}dz_{2}
 \frac{1}{|z_2|^{\frac{2n-m-1}{(n-m)(m+1)}p-\frac{n-1}{n-m}}}\int_{0}^
 {\delta^{\frac{n-1}{m-n}}}\psi^{\frac{2(m-1)}{(m-2)(m+1)}p}(\xi_1, z_2)d\xi_1
 \leq \\ c\int_{0}^{1}\frac{dz_2}{z_2^{\frac{2n-m-1}{(n-m)(m+1)}p-
 \frac{n-1}{n-m}}}<+\infty
 \end{eqnarray*}
 whenever  $p<m+1$.

 Summing obtained estimates we came to a proof of the Lemma \ref{aux}.

 Indeed, for the integral $I_1$ we have the following uniform estimate:
 \begin{eqnarray*}\nonumber
 |I_1|\lesssim\frac1{|\lambda|^\frac1n}.
 \end{eqnarray*}
 If $\varepsilon\ge \frac2m-\frac1n$ then the last estimate enough to have a proof of the Lemma   \ref{aux}.

 Suppose $0<\varepsilon< \frac2m-\frac1n$. Then we use the estimate
 \begin{eqnarray*}\nonumber
 |I_1|\lesssim\frac{\tilde\psi_1(z)}{|\lambda|^\frac2m},
 \end{eqnarray*}
 with $\tilde\psi_1\in L^{m+1-0}(V)$.

Then interpolating the last two inequalities we get:
\begin{eqnarray*}\nonumber
 |I_1|\lesssim\frac{\tilde \psi_1^{1-\theta}}{|\lambda|^{\frac{\theta}n+\frac{2(1-\theta)}m}},
 \end{eqnarray*}
where $0<\theta<1$. We can choose the number $\theta$ such that the following relation
 \begin{eqnarray*}\nonumber
 \frac{\theta}n+\frac{2(1-\theta)}m=\frac2{m}-\varepsilon \quad \mbox{or}\quad \varepsilon=\theta\left(\frac2m-\frac1n\right)>0.
 \end{eqnarray*}
 holds.
 Then surely, the inclusion $\tilde \psi_1^{1-\theta}\in L^{m+1}(V)$ is obviously valid.

  Analogical result holds true for the case $n=\infty$.

 Indeed, assume $n=\infty$ then $b_0$ is a flat function at the origin. By $R-$condition we have $b_0\equiv0$.
 So, we have
 \begin{eqnarray*}
 \phi (x_1, x_2)=b(x_1, x_2)(x_2-x_1^m\omega(x_1))^2.
 \end{eqnarray*}

In this case the phase function $\Phi_2$ has the form:
 \begin{eqnarray*}
 \Phi_1 (x_1, z)=z_2 x_1^m\omega(x_1)+z_2^2x_1q(x_1, z_2)+z_1x_1.
 \end{eqnarray*}

Then if $|z_1|\ge |z_2|$ then the phase function has no critical point in $x_1$. Then we can use integration by parts formula and  have
  \begin{eqnarray*}
|I_1|\lesssim \frac1{1+|\lambda z_1|}.
 \end{eqnarray*}
The last estimate yields
\begin{eqnarray*}
|I_1|\lesssim \frac1{\sqrt[4]{|z_1z_2|}\sqrt{|\lambda|}}.
 \end{eqnarray*}

Now, suppose $|z_1|\le |z_2|$. Then we can pull out $z_2$ and have the following estimate
 \begin{eqnarray*}
|I_1|\le  \frac{\psi \left(\frac{z_1}{z_2}, z_2\right)}{|z_2\lambda|^\frac12}=:\frac{\tilde\psi(z)}{|\lambda|^\frac12},
 \end{eqnarray*}
where $\tilde\psi\in L^{\frac{2(m-1)}{m-2}-0}(V)$ (see \cite{akr}), in the case $m\ge3$ and if $m=2$ then $\tilde\psi\in L^{4-0}(V)$ . Then we have a conclusion of the Lemma \ref{aux} as before.

 Which finishes a proof of the Lemma \ref{aux}.

\end{proof}

From the Lemma  \ref{aux} it follows the required upper bound for the number $k_p(v)$ in the case $2m<n$.
Indeed, first, we use the Proposition \ref{Sugi1} and  obtain $L^{p_0}\mapsto L^{p'_0}$ boundedness of the convolution operator $M_k$ with $k>\frac52-\frac3{m+1}$ for $p_0=\frac{2m+2}{2m+1}$. Also, we get $L^{p_1}\mapsto L^{p'_1}$ boundedness of the convolution operator with $k>\frac52-\frac1n$ for $p_1=1$ and also $L^{p_2}\mapsto L^{p'_2}$ boundedness of the convolution operator with $k=0$ for $p_2=2$.
Then by analytic interpolation of the obtained estimates we get the required upper bound for the number $k_p(v)$.

Further, we consider a lower bound for the number  $k_p(v)$.

\section{On the sharpness of results (a lower bound for the number $k_p(v))$}\label{sharp}

 \begin{thm}\label{thsharp}
  If   $2m\ge n$, then there exists an amplitude function $a_k$ such that   the associated  operator  $M_k$
  is not $ L^p(\mathbb{R}^3)\mapsto L^{p'}(\mathbb{R}^3)$ bounded, whenever
  $k<(5-\frac{2}{n})(\frac{1}{p}-\frac{1}{2})$.
 \end{thm}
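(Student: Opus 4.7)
The plan is to establish the lower bound via a Knapp-type construction with a radially thick frequency cap adapted to the $A_{n-1}$ singularity of $\Sigma$ at $v$. I take $a_k(\xi) := |\xi|^{-k}\chi_1(\xi/|\xi|)$ with $\chi_1 \in C^\infty(S^2)$ nonnegative, $\chi_1(v) > 0$, and support small enough that $\mathrm{supp}\, a_k \subset \Gamma$. For $R \gg 1$, let $\chi_{Q_R}$ be a smooth bump on the anisotropic cap
\[
Q_R := \{\xi \in \mathbb{R}^3 : |\xi_1| \le cR^{1-1/n},\ |\xi_2| \le cR^{1/2},\ R/2 \le \xi_3 \le 2R\},
\]
where $c>0$ is small enough that $Q_R \subset \Gamma$ for all sufficiently large $R$ (since $\xi/|\xi|\to v$ uniformly on $Q_R$). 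Set $\hat f := \chi_{Q_R}$. Since $\hat f$ factors as a tensor of smooth bumps in the scales $\delta = (\delta_1,\delta_2,\delta_3) := (R^{1-1/n}, R^{1/2}, R)$, the standard Knapp computation yields $\|f\|_p \sim |Q_R|^{1/p'}$, with $|Q_R| \sim R^{5/2-1/n}$.

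On $Q_R$ we have $|\xi| \sim R$, so $a_k(\xi) \sim R^{-k}$, and hence $\widehat{M_k f} \sim R^{-k}\hat g$ with $\hat g := e^{i\varphi}\chi_{Q_R}$. The crux is to prove that $\|g\|_{p'} \sim |Q_R|^{1/p}$. Changing variables $\xi_j = Rv_j + \delta_j y_j$ gives
\[
g(x) = |Q_R|\, e^{-iRx_3} \int_{[-1,1]^3} e^{-i\sum_j\delta_j x_j y_j + iR\varphi(v + \delta y/R)}\chi(y)\, dy.
\]
By Euler's identity and the implicit representation $\varphi(\eta) - 1 = h(\eta)(\eta_3 - 1 - \phi(\eta_1, \eta_2))$ near $v$, with $h(v) = 1$, the homogeneity of $\varphi$ gives $\varphi(\xi) = \xi_3 - \xi_3 h(\xi/\xi_3)\phi(\xi_1/\xi_3, \xi_2/\xi_3)$ for $\xi/\xi_3$ near $v$. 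Using the normal form $\phi = b(x_1,x_2)(x_2-\psi(x_1))^2 + b_0(x_1)$ of Proposition \ref{normform} and the hypothesis $2m \ge n$, I verify that
\[
R\,\varphi(v + \delta y/R) = R(1+y_3) + \Psi_R(y),
\]
where $\Psi_R$ converges in $C^\infty$ as $R \to \infty$ to a smooth bounded function on $\{y_3 \in [-1/2, 1/2], (y_1,y_2) \in [-1,1]^2\}$.

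The hypothesis $2m \ge n$ enters precisely here: setting $\eta_1 := \xi_1/\xi_3 = O(R^{-1/n})$ and $\eta_2 := \xi_2/\xi_3 = O(R^{-1/2})$, the term $\psi(\eta_1) = \eta_1^m\omega(\eta_1) = O(R^{-m/n})$ is $O(R^{-1/2})$ precisely when $m \ge n/2$, so $(\eta_2 - \psi(\eta_1))^2 = O(R^{-1})$; similarly $b_0(\eta_1) = O(R^{-1})$. Hence $\xi_3\phi = O(1)$ on $Q_R$, giving $\Psi_R = O(1)$ together with uniform control of all higher derivatives. Absorbing $e^{iRy_3}$ as a shift of $\delta_3 x_3$ by $R$, the $y$-integral becomes the Fourier transform of $\chi(y)e^{i\Psi_R(y)}$ evaluated at $u = (\delta_1 x_1, \delta_2 x_2, \delta_3 x_3 - R)$, producing a Schwartz function $F$ with $\|F\|_{p'}$ bounded uniformly in $R$. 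Therefore $\|g\|_{p'} = |Q_R|\prod_j \delta_j^{-1/p'}\|F\|_{p'} \sim |Q_R|^{1/p}$.

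Combining the estimates,
\[
\frac{\|M_k f\|_{p'}}{\|f\|_p} \gtrsim R^{-k}|Q_R|^{1/p - 1/p'} = R^{-k + (5-2/n)(1/p-1/2)},
\]
which tends to $\infty$ as $R \to \infty$ whenever $k < (5-2/n)(1/p-1/2)$, contradicting any $L^p \to L^{p'}$ bound for $M_k$. The main technical obstacle is the uniform $C^\infty$ control of the residual phase $\Psi_R$, which depends delicately on the adapted rescaling and on the hypothesis $2m \ge n$; when the latter fails, both the cap dimensions and the required estimates change, leading to the more elaborate Knapp example needed for case (ii) of Theorem \ref{main}.
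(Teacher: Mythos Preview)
Your proof is correct and is essentially the same Knapp-type argument as the paper's (which follows Sugimoto): both localize the Fourier side to an anisotropic cap of volume $\sim R^{5/2-1/n}$ adapted to the $A_{n-1}$ singularity and verify that $e^{i\varphi}$ is effectively non-oscillating on that cap, which is exactly where the hypothesis $2m\ge n$ enters via the bound $\psi(\eta_1)=O(R^{-m/n})=O(R^{-1/2})$. The only cosmetic difference is that the paper works in the surface-adapted coordinates $\xi=\lambda(y,1+\phi(y))$ and bounds $\|u_j\|_{L^p}$ by Hausdorff--Young, whereas you work directly with a Euclidean cap and spell out the role of $2m\ge n$ more explicitly; the two presentations are interchangeable.
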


\begin{proof}  Actually, we show that the sequence of functions suggested by M. Sugimoto in the paper \cite{sugumoto98} can be used to prove sharpness of the upper for $k_p(v)$ in the case (i).   Let us take a smooth function in $\mathbb{R}^3$ such that  $a_k(\xi)=|\xi|^{-k}$ for large $\xi$.
Following M. Sugimoto we introduce the function:
  $G(y)=1+\phi(y_1,y_2)-y\nabla\phi(y)$.
  Define non-negative functions
   $f(0)=g(0)=1$ concentrated in a sufficiently small neighborhood of the origin, and a smooth function with $\psi(1)=1$ and with support in a sufficiently small neighborhood of the point  $1$.

We set
\begin{eqnarray*}\nonumber
  u_{j}(x)=2^{j\left(\frac{5}{2}-\frac{1}{n}\right)\left(-\frac{1}{p'}\right)}F^{-1}
  (v_{j}(2^{-j}\xi))(x),
\end{eqnarray*}
where
\begin{eqnarray*}\nonumber
  v_{j}(\xi)=\frac{f\left(2^{\frac{j}{2}}\frac{\xi_{1}}{\varphi(\xi)}\right)
  g\left(2^{\frac{j}{n}}\frac{\xi_{2}}{\varphi(\xi)}\right)\psi(\varphi(\xi))|\xi|^k}
  {\varphi(\xi)^{2}G\left(\frac{\xi_1}{\varphi(\xi)},\frac{\xi_2}{\varphi(\xi)}\right)}\in C^\infty_0(\mathbb{R}^3).
\end{eqnarray*}

The sequence   $\{F^{-1}(v_{j}(2^{-\frac{j}{2}}\xi_1,2^{-\frac{j}{n}}\xi_{2},
  \xi_3))\}_{j=1}^{\infty}$ is bounded in  $L^{p}(\mathbb{R}^{3})$. Indeed,  the classical   Hausdorff-Young inequality yields:
  \begin{eqnarray*}\nonumber
  \|F^{-1}(v_{j}(2^{-j/2}\cdot, 2^{-j/n}\cdot, \cdot))\|_{L^{p}}\lesssim
  \|v_{j}(2^{-j/2}\cdot, 2^{-j/n}\cdot, \cdot)\|_{L^{p'}}.
  \end{eqnarray*}

On the other hand
 \begin{eqnarray*}\nonumber
 \|v_{j}(2^{-j/2}\cdot,2^{-j/n}\cdot,\cdot)\|_{L^{p'}}^{p'}=
\int f^{p'}\left(\frac{\xi_{1}}{\varphi(2^{-j/2}\xi_{1},2^{-j/n}\xi_2,\xi_3)}\right)
g^{p'}\left(\frac{\xi_{2}}{\varphi(2^{-j/2}\xi_{1},2^{-j/n}\xi_2,\xi_3)}\right)\\
\frac{\psi^{p'}(\varphi(2^{-j/2}\xi_{1},2^{-j/n}\xi_2,\xi_3))
((2^{-j/2}\xi_1)^2+(2^{-j/n}\xi_2)^2+\xi_{3}^{2})^{k/2}}{\varphi(2^{-j/2}
\xi_1,2^{-j/n}\xi_2,\xi_3)^{2p'}G^{p'}(\xi_{1}/\varphi(2^{-j/2}\xi_{1},2^{-j/n}\xi_2,\xi_3), \xi_{2}/\varphi(2^{-j/2}\xi_{1},2^{-j/n}\xi_2,\xi_3)) }d\xi.
 \end{eqnarray*}

Since  $\psi$ is supported in a sufficiently small neighborhood of  one, then we have:
$\frac{1}{2}\leq\varphi(2^{-j/2}\xi_1,2^{-j/n}\xi_2,\xi_3)\leq 2$. On the other hand supports of the functions  $f$ and  $g$ are  concentrated in a sufficiently small neighborhood of the origin. Hence,   $|\xi_1|<1$ and $|\xi_2|<1$ and also $|\xi_3|\sim1$, because  $\varphi(0, 0, 1)=1$.  This yields:
$$\|v_{j}(2^{-j/2}\cdot, 2^{-j/n}\cdot, \cdot)\|_{L^{p'}}\lesssim 1.$$
 Consequently,
 $$\|F^{-1}(v_{j}(2^{-j}\cdot))\|_{L^{p}}\lesssim 2^{j(\frac{1}{2}+
 \frac{n-1}{n}+1)\frac{1}{p'}}=2^{j(\frac{5}{2}-\frac{1}{n})\frac{1}{p'}}.$$
 Hence the sequence  $\{u_{j}\}_{j=1}^{\infty}$ is bounded in the space  $L^{p}(\mathbb{R}^3)$.

 On the other hand we have the relation:
  $$M_{k}u_{j}(x)=2^{j(\frac{5}{2}-\frac{1}{n})(-\frac{1}{p'})-kj+2j}
  F^{-1}
  \left(e^{i\varphi(\xi)}\frac{f\left(2^{\frac{j}2}\frac{\xi_1}
  {\varphi(\xi)}\right)g\left(2^{\frac{j}{n}}
  \frac{\xi_2}{\varphi(\xi)}\right)\psi(2^{-j}\varphi(\xi))}
  {\varphi(\xi)^{2}G\left(\frac{\xi_1}{\varphi(\xi)},\frac{\xi_2}{\varphi(\xi)}\right)}\right).$$

 We perform the change of variables given by the scaling  $2^{-j}\xi\mapsto\xi$ and obtain:
$$M_{k}u_{j}(x)=\frac{2^{j((\frac{5}{2}-\frac{1}{n})(-\frac{1}{p'})-k+3)}}{\sqrt{(2\pi)^3}}
\int_{\mathbb{R}^3} e^{2^{j}i(\varphi(\xi)- x\xi)}\frac{f\left(2^{\frac{j}{2}}\frac{\xi_1}
{\varphi(\xi)}\right)g\left(2^{\frac{j}{n}}\frac{\xi_2}{\varphi(\xi)}\right)\psi(\varphi(\xi))}
{\varphi^{2}(\xi)G\left(\frac{\xi_1}{\varphi(\xi)},\frac{\xi_2}{\varphi(\xi)}\right)}d\xi.$$

Then following M. Sugimoto we use change of variables  $\xi=(\lambda
y,\lambda(1+\phi(y)))$  and get:
$$M_{k}u_{j}(x)=\frac{2^{j((\frac{5}{2}-\frac{1}{n})(-\frac{1}{p'})-k+3)}}{\sqrt{(2\pi)^3}}\int
e^{i2^{j}\lambda(1-(x_{1}y_{1}+x_{2}y_{2}+x_{3}(1+\phi(y))))}
f(2^{\frac{j}2}y_1)g(2^{\frac{1}{n}}y_2)\psi(\lambda)d\lambda dy.$$

Finally, we use change of variables
$2^{\frac{j}{2}}y_1\mapsto y_1, \,
2^{j/n}y_2\mapsto y_2$ and obtain:
\begin{eqnarray*}\nonumber
M_{k}u_{j}(x)=2^{j((\frac{5}{2}-\frac{1}{n})(-\frac{1}{p'})-k-\frac{1}{2}
-\frac{1}{n}+3)}\int_{\mathbb{R}^3}
e^{2^{j}i\lambda((x_3-1)-2^{-\frac{j}{2}}y_{1}x_{1}-2^{-\frac{j}{n}}y_{2}x_{2}
-x_{3}\phi(2^{-\frac{j}2}y_1,
2^{-\frac{j}n}y_2))}\\ f(y_{1})g(y_2)\psi(\lambda)d\lambda dy.
\end{eqnarray*}

 If
$|x_{3}-1|\ll 2^{-j},\, |x_{1}|\ll 2^{-j/2}, \, |x_2|\ll
2^{\frac{-j(n-1)}{n}}$,  then the phase is the non-oscillating function, because
$\phi(2^{-j/2}y_{1},2^{-j/n}y_2)=o(2^{-j})$ provided the supports of $f, g$ are small enough.

Consequently, we have the following lower bound:
$$\|M_{k}u_{j}\|_{L^{p'}}\gtrsim 2^{j((\frac{5}{2}-\frac{1}{n})
(-\frac{1}{p'})-k+\frac{5}{2}-\frac{1}{n}-(\frac5{2}-\frac1{n})
(\frac{1}{p'}))}=2^{j((5-\frac{2}{n})(-\frac{1}{p'})+\frac{5}{2}-
\frac{1}{n}-k)}=2^{j((5-\frac{2}{n})(\frac{1}{p}-\frac{1}{2})-k)}.$$

Therefore, if
$k<k_p(\Sigma):=(5-\frac{2}{n})(\frac{1}{p}-\frac{1}{2}),$ then
$\|M_{k}u_{j}\|_{L^{p'}}\rightarrow \infty (\mbox{as}\, j\to+\infty)$. Thus, the operator
 $M_{k}:L^{p}(\mathbb{R}^3)\rightarrow L^{p'}(\mathbb{R}^3)$ is unbounded.

\end{proof}

 The Theorem \ref{thsharp} finishes a proof of the part (i) of the main Theorem \ref{main}.

Further, we consider the case
 $2m<n$.

\begin{remark}\label{nonadap}
The proof of the Theorem \ref{thsharp} shows that if $2m<n$ and $k<(5-\frac1m)(\frac{1}{p}-\frac{1}{2}),$ then
$\|M_{k}u_{j}\|_{L^{p'}}\rightarrow \infty (\mbox{as}\, j\to+\infty)$. Thus, the operator
 $M_{k}:L^{p}(\mathbb{R}^3)\rightarrow L^{p'}(\mathbb{R}^3)$ is an unbounded operator, whenever $k<(5-\frac1m)(\frac{1}{p}-\frac{1}{2})$.
 Indeed, we can repeat all arguments of the Theorem \ref{sharp} taking the function
 \begin{eqnarray*}\nonumber
  u_{j}(x)=2^{j\left(\frac{5}{2}-\frac{1}{2m}\right)\left(-\frac{1}{p'}\right)}F^{-1}
  (v_{j}(2^{-j}\xi))(x),
\end{eqnarray*}
with
\begin{eqnarray*}\nonumber
  v_{j}(\xi)=\frac{f\left(2^{\frac{j}{2}}\frac{\xi_{1}}{\varphi(\xi)}\right)
  g\left(2^{\frac{j}{2m}}\frac{\xi_{2}}{\varphi(\xi)}\right)\psi(\varphi(\xi))|\xi|^k}
  {\varphi(\xi)^{2}G\left(\frac{\xi_1}{\varphi(\xi)},\frac{\xi_2}{\varphi(\xi)}\right)}\in C^\infty_0(\mathbb{R}^3)
\end{eqnarray*}
for the case $2m<n$ and obtain the following lower bound:
\begin{equation}\label{upbound2}
k_p(v)\ge (5-\frac1m)(\frac1p-\frac12).
\end{equation}
for the number $k_p(v)$ whenever $2m<n$.
\end{remark}

Now, we prove the Theorem.
 \begin{thm}\label{NLA}
 If $2m<n$, and $m\geq3$ then
 \begin{equation}\label{NLAcase}
  k_p(v)=\max\left\{(5-\frac{1}{m})(\frac{1}{p}
 -\frac{1}{2}),(6-\frac{2(m+1)}{n})(\frac{1}{p}-\frac{1}{2})-\frac{1}{2}
 +\frac{m}{n}\right\}.
 \end{equation}
  \end{thm}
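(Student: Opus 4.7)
The statement has three parts: the upper bound $k_p(v)\le\max\{\cdot,\cdot\}$ and the two lower bounds matching the two arguments of the $\max$.

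For the upper bound, I would consolidate the interpolation argument concluded at the end of Section 3. Lemma \ref{aux} with Proposition \ref{Sugi1} (taking $q=m+1,\ \alpha=\tfrac12+\tfrac{2}{m+1}-\varepsilon$) gives $L^{p_0}\to L^{p_0'}$ boundedness for $k>\tfrac52-\tfrac{3}{m+1}+\varepsilon$ at $p_0=\tfrac{2m+2}{2m+1}$; the uniform estimate $\|I(\lambda,\cdot)\|_{L^\infty}\lesssim\lambda^{-(1/2+1/n)}$ derived at the start of Section 3, combined with Proposition \ref{Sugi2}, yields $L^1\to L^\infty$ boundedness for $k>\tfrac52-\tfrac1n$; together with the trivial $L^2\to L^2$ bound at $k=0$, analytic interpolation through these three endpoints reproduces exactly the piecewise linear envelope on the right of (\ref{NLAcase}). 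The first argument of the $\max$ is the interpolant between $p_0$ and $p=2$, the second between $p=1$ and $p_0$, and the two pieces agree at $p=p_0$.

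The first lower bound $k_p(v)\ge(5-\tfrac1m)(\tfrac1p-\tfrac12)$ is supplied by Remark \ref{nonadap}, via the Knapp-type sequence with anisotropic scaling $(\xi_1,\xi_2)\sim(2^{-j/2},2^{-j/(2m)})$ that forces $\|M_ku_j\|_{L^{p'}}/\|u_j\|_{L^p}\to\infty$ below the threshold.

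The main new ingredient is the second lower bound $k_p(v)\ge(6-\tfrac{2(m+1)}{n})(\tfrac1p-\tfrac12)-\tfrac12+\tfrac{m}{n}$, which governs $p\in[1,p_0]$. The plan is to construct a new Knapp-type sequence $\tilde u_j$ adapted to the $A_{n-1}$ cusp geometry by replacing the amplitude of Theorem \ref{thsharp} with one concentrated on the curved tube $\{|\xi_1/\varphi(\xi)|\lesssim 2^{-j/n},\ |(\xi_2-\psi(\xi_1))/\varphi(\xi)|\lesssim 2^{-j/2},\ \varphi(\xi)\sim 1\}$, the natural Knapp cap of the cusp at its degenerate direction. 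Following the Theorem \ref{thsharp} substitution chain but using the anisotropic straightening $y_1\to 2^{-j/n}y_1$, $y_2\to\psi(2^{-j/n}y_1)+2^{-j/2}y_2$ (rather than a straight rescaling), one identifies a physical non-oscillation region of size $|x_3-1|\lesssim 2^{-j}$, $|x_1|\lesssim 2^{-j(n-1)/n}$, $|x_2|\lesssim 2^{-j(n-m)/n}$, and compares $\|M_k\tilde u_j\|_{L^{p'}}$ against $\|\tilde u_j\|_{L^p}$ (the latter computed directly from the explicit form of $\tilde v_j$, since Hausdorff--Young is slack for the curved support). The additive $-\tfrac12+\tfrac{m}{n}$ reflects the mismatch between the tube cross-section $2^{-j/2}$ and the arc length $2^{-jm/n}$ swept by $\psi$ over $|\xi_1|\lesssim 2^{-j/n}$.

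The hardest step is this curved-cap bookkeeping. The curvature of the tube forces both the $L^p$-norm of $\tilde u_j$ and the effective integration window in the non-oscillation estimate to carry geometric corrections, and all of these must be tracked quantitatively for the two sides to balance into precisely the $-\tfrac12+\tfrac{m}{n}$ additive constant matching the Lemma \ref{aux} upper bound.
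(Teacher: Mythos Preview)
Your treatment of the upper bound and of the first lower bound $(5-\tfrac1m)(\tfrac1p-\tfrac12)$ is correct and matches the paper exactly.

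For the second lower bound your Knapp cap is \emph{different} from the paper's. You localize the transverse variable to $|(\xi_2-\psi(\xi_1))/\varphi|\lesssim 2^{-j/2}$, whereas the paper takes the coarser width $2^{-jm/n}$ (note $2^{-j/2}\ll 2^{-jm/n}$ since $2m<n$). This single choice changes where the work lies. With the paper's width, the lateral excursion of the curve $y_2=\psi(y_1)$ over $|y_1|\lesssim 2^{-j/n}$ is exactly $\sim 2^{-jm/n}$, so after the \emph{linear} rescaling $(y_1,y_2)\mapsto(2^{j/n}y_1,\,2^{jm/n}y_2)$ the amplitude $v_j$ converges to a fixed $C_0^\infty$ function; hence $\|u_j\|_{L^p}\sim 1$ follows immediately from the fact that the inverse Fourier transform of a uniformly bounded $C_0^\infty$ sequence is uniformly Schwartz (this is what the paper is really using when it invokes ``Hausdorff--Young''). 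The price is that the $(x_2-\psi(x_1))^2$ term in $\phi$ produces a large phase $2^{j(1-2m/n)}z_2^2$ in $M_ku_j$, which the paper handles by one application of stationary phase in $z_2$, picking up the factor $2^{-j(1/2-m/n)}$ that encodes the $-\tfrac12+\tfrac{m}{n}$.

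With your width $2^{-j/2}$ the roles are reversed: the $z_2$-phase in $M_k\tilde u_j$ is $O(1)$ and the lower bound on $\|M_k\tilde u_j\|_{L^{p'}}$ over the physical region is indeed easy. But now the tube is \emph{significantly} curved (excursion $2^{-jm/n}\gg$ width $2^{-j/2}$), so no linear rescaling brings $\tilde v_j$ to a fixed function, and your ``compute $\|\tilde u_j\|_{L^p}$ directly'' step is a genuine oscillatory-integral calculation in its own right (morally an $L^p$ extension estimate for the curve $y_2=\psi(y_1)$). You correctly flag that Hausdorff--Young is slack here, but you give no mechanism for the upper bound on $\|\tilde u_j\|_{L^p}$; a naive triangle-inequality decomposition of the curved tube into $\sim 2^{j(1/2-m/n)}$ straight boxes loses exactly the factor you need. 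So the ``curved-cap bookkeeping'' you identify as the hardest step is in fact the place where your outline has a gap, and the paper's choice of width $2^{-jm/n}$ is precisely the trick that eliminates it, transferring all the nontriviality to a single standard stationary-phase step on the $M_ku_j$ side.
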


\begin{proof}

Since we already  got the upper bound for  $k_p(v)$, then it is enough to prove a lower bound for that number.

If $k<(5-\frac{1}{m})(\frac{1}{p}-\frac{1}{2})$,
 then the operator  $M_{k}$ is not $L^p(\mathbb{R}^3)\mapsto L^p(\mathbb{R}^3)$ bounded (see Remark \ref{nonadap}).

Assume  $k<(6-\frac{2(m+1)}{n})
 (\frac{1}{p}-\frac{1}{2})-\frac{1}{2}+\frac{m}{n}$.
We show that   $M_{k}$ is not $L^p(\mathbb{R}^3)\mapsto L^p(\mathbb{R}^3)$ bounded.

We a little modified the M. Sugimoto sequence  and consider the sequence
\begin{eqnarray*}
u_{j}=2^{-\frac{3j}{p'}+\frac{j(m+1)}{n}}F^{-1}(v_{j}(2^{-j}\cdot))(x),
\end{eqnarray*}
where
$$v_{j}(\xi)=f\left(2^{\frac{jm}{n}}\left(
\frac{\xi_2}{\varphi(\xi)}-\left(\frac{\xi_1}{\varphi(\xi)}\right)^{m}\omega
\left(\frac{\xi_1}{\varphi(\xi)}\right)\right)\right)\frac{g\left(2^{\frac{j}{n}}\frac{\xi_1}
{\varphi(\xi)}\right)\psi(\varphi(\xi))|\xi|^{k}}{\varphi^{2}(\xi)
G\left(\frac{\xi_{1}}{\varphi(\xi)},\frac{\xi_2}{\varphi(\xi)}\right)},$$ where
$f,g,\psi\in C_{0}^{\infty}(\mathbb{R})$ are non-negative smooth functions satisfying the conditions: $f(0)=g(0)=1$ and supports of
$f,g$  lie in a sufficiently small neighborhood of the origin of  $\mathbb{R}$. Suppose $0<c<<1$ is a fixed positive number (say $c=0.0001$) and $\psi$ is a non-negative smooth function concentrated in a sufficiently small neighborhood of the point $c$ and identically vanishes in a neighborhood of the origin and also $\psi(c)=1$,  (cf. \cite{sugumoto98}). Obviously
$v_{j}\in C_{0}^{\infty}(\mathbb{R}^{3})$ and
$\|v_{j}\|_{L^{p'}(\mathbb{R}^{3})}\sim2^{-j\frac{m+1}{p'n}},$ where the symbol $"\sim"$  means that there exit non-zero constants   $c_{1},c_{2}>0$ such that   $$c_{1}2^{-j\frac{m+1}{n}}\leq\int_{\mathbb{R}^3}|v_{j}(\xi)|^{p'}d\xi\leq
  c_{2}2^{-j\frac{m+1}{n}}.$$

Indeed, we use change of variables  $\xi=\lambda(y_{1}, y_{2}, 1+\phi(y_{1},y_{2}))$ in the integral
$\int_{\mathbb{R}^{3}}|v_{j}(\xi)|^{p'}d\xi$.  Note that on the support of $v_j$ make sense the change of variables, provided $j$ is big enough.
 Then
we get:
\begin{eqnarray*}\nonumber
\int_{\mathbb{R}^{3}}|v_{j}(\xi)|^{p'}d\xi=\int_{\mathbb{R}^3}f^{p'}
(2^{j\frac{m}{n}}(y_{2}-y_{1}^{m}\omega(y_1)))g^{p'}(2^{\frac{j}{n}}
y_1)\psi^{p'}(\lambda) \\ \lambda^{(k-2)p'+2}(y_{1}^{2}+y_{2}^{2}+
(1+\phi(y_{1},y_{2}))^{2})^{\frac{kp'}{2}}
 G^{2-p'}(y_{1},y_{2})
 dy_{1}dy_{2}d\lambda\sim 2^{-j\frac{m+1}{n}}.
 \end{eqnarray*}

Thus for large  $j$ we have
  $$\|u_{j}\|_{L^{p}(\mathbb{R}^{3})}\sim 1.$$
  Now, we consider the lower estimate for $\|M_{k}u_{j}\|_{L^{p'}(\mathbb{R}^{3})}.$

We have:
$$M_{k}u_{j}=F^{-1}e^{i\varphi(\xi)}a_{k}(\xi)Fu_{j}=2^{-\frac{3j}{p'}
+j\frac{m+1}{np'}}F^{-1}(e^{i\varphi(\xi)}a_{k}(\xi)v_{j}(2^{-j}\xi))(x).$$

We perform  change of variables given by the scaling  $2^{j}\xi\rightarrow \xi$ and obtain:
\begin{eqnarray*}\nonumber
M_{k}u_{j}(x)=\frac{2^{\frac{3j}{p}+\frac{j(m+1)}{np'}-kj}}{\sqrt{(2\pi)^3}}\int_{\mathbb{R}^{3}}
e^{i2^{j}(\varphi(\xi)-\xi
x)}\\ f\left(2^{\frac{jm}{n}}\left(\frac{\xi_2}{\varphi(\xi)}-\left(\frac{\xi_1}
{\varphi(\xi)}\right)^{m}\omega\left(\frac{\xi_1}{\varphi(\xi)}\right)\right)\right) \frac{g\left(2^{\frac{j}{n}
}\frac{\xi_1}{\varphi(\xi)}\right)\psi(\varphi(\xi))}{\varphi^{2}(\xi)
G\left(\frac{\xi_1}{\varphi(\xi)},\frac{\xi_2}{\varphi(\xi)}\right)}d\xi.
\end{eqnarray*}

Finally, we use change of variables  $\xi\rightarrow
\lambda(y_1,y_2,1+\phi(y_1,y_2))$.  Then we have:
\begin{eqnarray*}\nonumber
M_{k}u_{j}(x)=\frac{2^{\frac{3j}{p}+\frac{j(m+1)}{p'}-kj}}{\sqrt{(2\pi)^3}}\int_{\mathbb{R}^3}
e^{i2^{j}\lambda(1-
x_{3}-(y_{1}x_{1}+y_{2}x_{2}+x_{3}\phi(y_{1},y_{2})))}\times\\ \times
f(2^{\frac{jm}{n}}(y_{2}-y_{1}^{m}\omega(y_1)))g(2^{\frac{j}{n}}y_{1})
\psi(\lambda)d\lambda dy_1 dy_2.
\end{eqnarray*}

Now, we perform the change of variables
 $$y_{1}=2^{-\frac{j}{n}}z_{1}, \, y_2=y_{1}^{m}\omega
 (y_1)+2^{-j\frac{m}{n}}z_2.$$
  Then we get
  \begin{eqnarray*}\nonumber
  M_{k}u_{j}(x)=2^{\frac{3j}{p}+\frac{m+1}{np'}j-\frac{m+1}{n}j-kj}
   \int e^{i2^{j}\lambda\Phi_3(z, x, j)}f(z_2)g(z_1)\psi(\lambda)d\lambda dz_1 dz_2,
  \end{eqnarray*}
where
\begin{eqnarray*}\nonumber
\Phi_3(z, x, j):=1- x_3 -(2^{-\frac{j}{n}}x_1 z_1 +
  x_2 2^{-\frac{jm}{n}}z_{1}^{m}\omega(2^{-\frac{j}{n}}z_1)+z_2 2^
  {-\frac{jm}{n}}x_2+\\ x_3 2^{-\frac{2jm}{n}}z_{2}^{2}b(2^{-\frac{j}{n}}z_1,
  2^{-\frac{jm}{n}}(z_{1}^{m}\omega(2^{-\frac{j}{n}}z_1)+z_2))+2^{-j}z_{1}^{n}
  \beta(2^{-\frac{j}{n}}z_1)).
 \end{eqnarray*}

We use stationary phase method in $z_2$ assuming , $|1-x_3|<<2^{-j}, \,
  |x_1|<<2^{-\frac{n-1}{n}j}, \, |x_2|<<2^{-\frac{j(n-m)}{n}}$ and obtain:
  \begin{eqnarray*}\nonumber
  M_k u_{j}(x)=2^{j(\frac{3}{p}+\frac{m+1}{np'}-\frac1{n}-\frac12
  -k)}
  \left(\int_{\mathbb{R}^{2}}e^{i2^{j}\lambda \Phi_4}f(z_{2}^{c}(z_1, x_2))g(z_1)\psi(\lambda)d\lambda dz_1+O(2^{j(\frac{2m}n-1))}\right),
\end{eqnarray*}
where
  \begin{eqnarray*}\nonumber
\Phi_4:=\Phi_4(z_1, x, j):=1-x_3 -x_1 z_1
  2^{-\frac{j}{n}}+x_2 2^{-\frac{jm}{n}}z_{1}^{m}\omega(2^{-\frac{j}{n}}z_1
  )+ \\ 2^{-j}z_{1}^{n}\beta(2^{-\frac{j}{n}}z_1)+x_{2}^{2}2^{-\frac{2jm}{n}}
  B(z_1,x_2).
\end{eqnarray*}

From here we obtain the lower bound:
  $$\|M_{k}u_{j}\|_{L^{p'}(\mathbb{R}^{3})}\ge 2^{j(\frac{3}{p}+
  \frac{m+1}{np'}-\frac{1}{n}-\frac{1}{2}-\frac{1}{p'}(3-\frac{m+1}{n})-k)}c,$$
   where $c>0$ is a constant which does not depend on $j$.
   Thus if
   $k<(6-\frac{2(m+1)}{n})(\frac{1}{p}-\frac{1}{2})-\frac{1}{2}+\frac{m}{n}$
   then the operator $M_k$ is not $L^p(\mathbb{R}^3)\mapsto L^p(\mathbb{R}^3)$ bounded.

Analogical result holds true for the case $n=\infty$.

Thus if  $k<k_p(v)$ then the   $M_k$ is not $L^p- L^{p'}$ bounded operator.
   This completes a proof of the Theorem \ref{NLA}.

\end{proof}

The Theorem \ref{NLA} finishes a proof of the part (ii) of the main Theorem \ref{main}.

{\bf Acknowledgement:} The authors wish to thank to Professor Sh. A. Alimov  for valuable  discussions
of  results.

\end{document}